\newtheorem{theorem}{Theorem}[section]
\newtheorem{lemma}[theorem]{Lemma}
\newtheorem{proposition}[theorem]{Proposition}
\newtheorem{corollary}[theorem]{Corollary}
\theoremstyle{definition}
\theoremstyle{remark}
\newtheorem{remark}[theorem]{Remark}
\numberwithin{equation}{section}
\newcommand{\R}{\mathbb{R}}
\newcommand{\N}{\mathbb{N}}
\renewcommand{\H}{\mathcal{H}}
\author{Mark Allen}
\address{Department of Mathematics, The University of Texas at Austin, Austin,
  TX 78712}
\email{mallen@math.utexas.edu}
\thanks{M.~Allen is supported by NSF grant DMS-1303632}
\title{A Fractional Free Boundary Problem related to a Plasma Problem}
\begin{document}

\begin{abstract}
 We study a fractional analogue of a plasma problem arising from physics. Specifically, for a fixed bounded domain
 $\Omega$ we study solutions to the eigenfunction equation
  \[
   (- \Delta)^s u = \lambda(u- \gamma)_+
  \]
 with $u \equiv 0$ on $\partial \Omega$.
\end{abstract}

\maketitle
\section{Introduction}    \label{s:introduction}
 A mathematical model for the region inhabiated by plasma in a Tokamak machine is given by the two dimensional equation
  \[
   \sum_{i=1}^2 \frac{\partial}{\partial x_i} \left( \frac{1}{x_1} \frac{\partial u}{\partial x_i}  \right) = u_+ 
  \]
 in a bounded domain $\Omega$. $u_+$ is the positive part of the function $u$. 
 The region inhabited by the plasma is given by $\{u>0\}$. Properties of solutions to
 this and similar problems was studied in \cite{t75} and \cite{bb80}. The simplified model with solutions to
  \begin{equation}  \label{e:ploc}
   -\Delta u = \lambda u_+
  \end{equation}
 was studied in \cite{t77}. Regularity of the free boundary $\partial \{u >0\}$ was studied in \cite{ks78}.  
 The physical applications of the simplified model \eqref{e:ploc} are for two dimensions; however, one may study
 \eqref{e:ploc} in higher dimensions. The regularity of the free boundary for higher dimensions was studied in 
 \cite{ks78}.  
 
 In this paper we study a fractional analogue of \eqref{e:ploc}. For a bounded domain $\Omega$ in $\R^n$ we consider
 solutions to the equation
  \begin{equation}  \label{e:p}
   (-\Delta)^s u = \lambda(u - \gamma)_+
  \end{equation}
 with $u \equiv 0$ on $\partial \Omega$. 
 The reason for the translation $(u-\gamma)$ is to utilize the definition of the fractional Laplacian with zero dirichlet data. 
 In Section \ref{s:fl} we use the extensiion operator and subtract out the constant $\gamma$ which gives an equation of the sort
  \[
   ``(-\Delta)^s u = \lambda u_+ \ ''
  \]
 The main aim of this paper is to study 
 properties of the free boundary $\{u(\cdot, 0)=0\}$.

 \subsection{Outline}
 The outline of the paper is as follows: In Section \ref{s:fl} we establish certain properties of the fractional Laplacian that will be needed in our paper. 
 We also discuss the notion of 
 the extension operator that allows one to ``localize'' the fractional Laplacian. 
 In Section  \ref{s:e} we prove existence of solutions to \eqref{e:p}.
 In Section  \ref{s:intreg} we prove interior regularity for solutions. 
 In Section  \ref{s:top} we begin the study of the free boundary. We prove topological properties of 
 the free boundary and show how they may differ from the original local plasma problem \eqref{e:ploc}. 
 In Section \ref{s:freereg} we use an Almgren's type frequency function to classify so-called blow-up solutions. The classification of blow-ups allows
 us a classification of the free boundary points. We then give a regularity result for the ``regular set'' of the free boundary. 
 In Section \ref{s:sing} we define the singular set and prove a Hausdorff dimensional bound for the singular set which 
 shows that the singular set is ``small''.
 
  \subsection{Notation}  \label{s:not} 
 The notation for this paper will be as follows. Throughout the paper $2s=1-a$ and $-1<a<1$ and $s$ will always 
 refer to the order of the fractional Laplacian $(-\Delta)^s$. $(x,y) \in R^{n+1}$ with
 $x \in \R^{n}$ and $y \in \R$. $\Omega$ will always be a smooth bounded domain. For a set $U \in \R^{n+1}$,
 
 - $L^2(a, U) := \{f \mid f|y|^{a/2} \in L^2(U)$\}.
 
 - $L^2(a, \partial U):= \{f \mid f|y|^{a/2} \in L^2(\partial U)\}$ with respect to $H^{n}$ Hausdorff measure.  
 
 - $H^1(a,U) := \{f \mid f, \nabla f \in L^2(a,U)$\}.
 
 - $U':= \{x \in \R^{n} \mid (x,0) \in U\}$. 
 
 - We refer to $\R^{n} \times \{0\}$ as the thin space.
 
 - $B_r := \{x \in \R^n \mid |x| <1 \}$

 - $f_{\pm}$ denote the positive and negative parts of $x$ respectively so that $f=f_+ - f_-$. 
 
 We denote the free boundary as $\Gamma =  \{u(\cdot,0) = 0\}$.

\section{Fractional Laplacian} \label{s:fl}
 We define the fractional Laplacian through the spectral decomposition. For a bounded domain $\Omega$ let 
 $0<\lambda_1<\lambda_2 \leq \ldots $ and $\{\phi\}$ be the eigenvalues and corresponding orthonormalized eigenfunctions
 with dirichlet zero boundary data. For $f \in L^2(\Omega)$ we write 
  \begin{equation}  \label{e:f}
   f = \sum_{k = 1}{a_k \phi_k}.
  \end{equation}
 Then the fractional Laplacian is given by
  \begin{equation}  \label{e:frac}
   (- \Delta)^s f(x) = \sum_{i=1}{\lambda_i^s a_i \phi_i(x)}. 
  \end{equation}
 The fractional Laplacian can also be given as a Dirichlet to Neumann boundary data map by the use 
 of an extension operator. In the case when $(- \Delta)^s$ is defined on all of $\R^n$ instead of on a bounded domain
 this equivalency was given in the paper \cite{cs07}. 
 For a bounded domain there is an analoguous extension operator \cite{st10}. We look at the solution to 
 the following weighted elliptic problem in an extra dimension 
  \[
   \begin{aligned}
    \text{div}(y^a \nabla u) &= 0 \text{ in } \Omega \times \R^+ \\
    u(x,0) &= f(x) \\
    u(x,y) &= 0 \text{  for  } (x,y) \in \partial \Omega \times \R^+\\
    u(x,y) & \to 0 \text{  as  } y \to \infty 
   \end{aligned}
  \]
 where $a=1-2s$. 
 The fractional Laplacian is a Dirichlet to Neumann boundary data map:
  \begin{equation}  \label{e:extend}
   c_a \lim_{y \to 0} y^{-a} u_y(x,y) = (- \Delta)^s u.  
  \end{equation}
 where $c_a$ is a constant only depending on $a$ and dimension $n$. 
 If we shift the solution downward by subtractin the constant $\gamma$ our equation of study then becomes
  \[
   c_a \lim_{y \to 0} y^{-a} u_y(x,y) = \lambda u_+ \text{  for  every  } x \in \Omega.
  \]
 Absorbing the constant into the right hand side we obtain 
  \begin{equation}  \label{e:eigen}
    \lim_{y \to 0} y^{-a} u_y(x,y) = -\lambda u_+ \text{  for  every  } x \in \Omega,
  \end{equation}
 where $\lambda>0$ is a new constant. Many of the results in this paper will apply to solutions of the more general equation \eqref{e:eigen}. However, if 
 $u$ is a solution to \eqref{e:p} then after subtracting $\gamma$, the extension function $u$ will satisfy \eqref{e:eigen}, but will also satisfy the additional condition
  \begin{equation}   \label{e:do}
   u_y \leq 0.
  \end{equation}
 Condition \eqref{e:do} will be used in Section \ref{s:sing} to give a Hausdorff dimensional bound on the singular set. 
 That solutions of \eqref{e:p} satisfy \eqref{e:do} is easily seen from the following argument. 
 If $u$ is the extension for a solution to \eqref{e:p}, then $y^a u_y$ is $-a$-harmonic (\cite{cs07}). Furthermore, $y^a u_y \to 0$ as $y \to \infty$. Then
 $y^a u_y$ has nonpositive 
 boundary data on $\partial(\Omega \times \R^+)$, and so from the maximum principle for $-a$-harmonic functions (\cite{FKJ}) we conclude that
 $u_y \leq 0$. 
 
  We will utilize the following notion of trace for the weight $y^a$ (see \cite{ALP}). 
 \begin{proposition}  \label{p:trace}
  Let $U \subset \R^{n+1}$ an open Lipschitz domain. Then there exists two compact operators
   \[
    \begin{aligned}
     T_1: H^1(a,U) &\hookrightarrow L^2(a, \partial U) \\
     T_2: H^1(a,U) &\hookrightarrow L^2(U')
    \end{aligned}
   \]
 \end{proposition}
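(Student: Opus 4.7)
Because $-1<a<1$, the weight $|y|^a$ belongs to the Muckenhoupt class $A_2$, which supplies density of smooth functions and the weighted Poincaré and Sobolev inequalities that underpin the whole argument. I would first establish continuity of each trace operator on smooth functions and extend by density, and then upgrade continuity to compactness via a fractional regularity gain on the trace.

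Continuity is handled by a partition of unity subordinate to a finite cover of $\overline{U}$ that flattens $\partial U$ in each chart. For $T_1$, apply the divergence theorem to $f^2 \vec{\eta}\,|y|^a$ with $\vec{\eta}$ a smooth vector field transverse to $\partial U$ to obtain
\[
\int_{\partial U} f^2 |y|^a \, d\mathcal{H}^n \leq C \int_U (f^2 + |\nabla f|^2) |y|^a \, dx\, dy
\]
for smooth $f$. For $T_2$ the weight is absent on $U'\subset\{y=0\}$, but since $a<1$ the function $y^{-a}$ is integrable near $0$; writing
\[
f(x,0) = f(x,\delta) - \int_0^\delta f_y(x,y)\, dy
\]
and splitting $f_y = (f_y\, y^{a/2})\cdot y^{-a/2}$ via Cauchy--Schwarz gives
\[
|f(x,0)|^2 \leq 2|f(x,\delta)|^2 + \frac{2\delta^{1-a}}{1-a}\int_0^\delta |f_y|^2 y^a \, dy,
\]
and integration in $x$ combined with Fubini yields $\|T_2 f\|_{L^2(U')} \leq C\|f\|_{H^1(a,U)}$.

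For compactness, the cleanest route is to identify each trace in a fractional Sobolev space that embeds compactly into $L^2$ via Rellich--Kondrachov. The Caffarelli--Silvestre extension characterization shows that the trace of $H^1(a,U)$ onto $\{y=0\}\cap U$ factors through $H^{(1-a)/2}(U')$, and on a bounded Lipschitz $U'$ the embedding $H^{(1-a)/2}(U')\hookrightarrow L^2(U')$ is compact; this handles $T_2$. For $T_1$, split $\partial U$ into the portion at positive distance from $\{y=0\}$, where $|y|^a$ is bounded above and below and classical Rellich trace compactness applies, and the portion meeting $\{y=0\}$, where one combines the same fractional trace identification with a compact embedding of the resulting weighted fractional space into $L^2(a,\partial U)$.

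The genuine difficulty will be compactness at the edge where $\partial U$ meets the thin space $\{y=0\}$: the weight degenerates ($a>0$) or is singular ($a<0$) there, and no single classical Sobolev embedding covers both boundary pieces uniformly. A robust fallback is a direct Frechet--Kolmogorov argument, using the weighted Poincaré inequality on thin annular regions to prove equicontinuity of translates of trace functions on bounded sets of $H^1(a,U)$.
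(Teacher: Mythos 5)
The paper offers no proof of Proposition~\ref{p:trace}: it refers directly to the cited reference and moves on to Corollary~\ref{c:trace}. So there is no in-paper argument to compare against, and your proposal has to be judged on its own.

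Your continuity step is basically sound, but the divergence-theorem computation for $T_1$ hides a subtlety. Differentiating $f^2|y|^a\vec\eta$ produces, besides $2f\nabla f\cdot\vec\eta\,|y|^a$ and $f^2\operatorname{div}(\vec\eta)|y|^a$, the term $a\,f^2\,|y|^{a-1}\operatorname{sgn}(y)\,\eta_{y}$ (where $\eta_{y}$ is the last component of $\vec\eta$), and $|y|^{a-1}$ is not controlled by the weight $|y|^a$ near $\{y=0\}$. You must take $\vec\eta$ with $\eta_{y}\equiv 0$ in a neighborhood of the thin space --- which is possible exactly where $\partial U$ meets $\{y=0\}$ transversally --- and handle any tangential portion of $\partial U$ by the same one-dimensional fundamental-theorem argument you already use for $T_2$. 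As written, the issue you flagged under ``compactness'' already affects continuity.

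The real gap is the one you acknowledged: compactness of $T_1$ on the portion of $\partial U$ meeting $\{y=0\}$. Passing through a weighted fractional trace space is heavier than the statement needs, and it is not clear how to push the Caffarelli--Silvestre characterization uniformly across a degenerately weighted piece of a Lipschitz boundary. An elementary route closes the gap using only what you already computed. First, sharpen both continuity estimates into the Ehrling form
\[
\|T_i f\|^2 \le \varepsilon\, \|\nabla f\|_{L^2(a,U)}^2 + C_\varepsilon\, \|f\|_{L^2(a,U)}^2 :
\]
for $T_1$ apply Young's inequality with $\varepsilon$ to the cross term $2f\,\nabla f\cdot\vec\eta\,|y|^a$ in the divergence identity; for $T_2$, multiply your pointwise bound by $\delta^a$, average over $\delta\in(\delta_0/2,\delta_0)$ so both sides carry the weight, and send $\delta_0\to 0$ to make the gradient coefficient small (this uses $1-a>0$). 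Second, use that $H^1(a,U)\hookrightarrow L^2(a,U)$ is compact on bounded $U$ because $|y|^a\in A_2$. Then, given $f_k\rightharpoonup f$ in $H^1(a,U)$, set $g_k=f_k-f$; one has $g_k\to 0$ strongly in $L^2(a,U)$, and feeding this into the interpolation inequality and letting $\varepsilon\to 0$ gives $T_i f_k\to T_i f$ in $L^2$. This treats $T_1$ and $T_2$ uniformly and never has to deal with the edge, so the Frechet--Kolmogorov fallback you mention becomes unnecessary.
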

 
 By utilizing rescaling and Proposition \ref{p:trace} on $B_1$ we obtain the following
  \begin{corollary}  \label{c:trace}
   Let $v \in H^1(a,B_r^+)$. Then there exists a constant $C =C(n,a)$ such that
    \[
     \begin{aligned}
      \int_{(\partial B_r)^+}{y^a u^2} d \H^n &\leq C r \int_{B_r^+}{y^a |\nabla u|^2} \\
      \int_{B_{r}'}{u^2}d \H^n  &\leq C r^{1-a} \int_{B_r^+}{y^a |\nabla u|^2}
     \end{aligned}
    \]
  \end{corollary}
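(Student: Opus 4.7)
The plan is to reduce to Proposition \ref{p:trace} on the unit upper half-ball by a rigid dilation. Given $u \in H^1(a, B_r^+)$, define $v(\xi) := u(r\xi)$ for $\xi \in B_1^+$. Then $v \in H^1(a, B_1^+)$, so Proposition \ref{p:trace} applied to $v$ yields bounded trace inequalities
\[
\int_{(\partial B_1)^+} \xi_{n+1}^a v^2 \, d\H^n \leq C \|v\|_{H^1(a, B_1^+)}^2, \qquad \int_{B_1'} v^2 \, d\H^n \leq C \|v\|_{H^1(a, B_1^+)}^2,
\]
with constants $C = C(n,a)$. Transporting these estimates back to $B_r^+$ by change of variables will produce the stated factors of $r$ and $r^{1-a}$.

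The key computation is the scaling of each integral under $\xi = x/r$. Using $d\H^n(x) = r^n \, d\H^n(\xi)$, $dx = r^{n+1} \, d\xi$, $\xi_{n+1}^a = r^{-a} y^a$, and $|\nabla v(\xi)|^2 = r^2 |\nabla u(x)|^2$, one finds
\[
\int_{(\partial B_1)^+} \xi_{n+1}^a v^2 = r^{-n-a} \int_{(\partial B_r)^+} y^a u^2, \qquad \int_{B_1^+} \xi_{n+1}^a |\nabla v|^2 = r^{1-n-a} \int_{B_r^+} y^a |\nabla u|^2,
\]
and $\int_{B_1'} v^2 = r^{-n} \int_{B_r'} u^2$. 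Substituting into the trace inequalities and multiplying through by the appropriate powers of $r$ matches the gradient term on the right to the trace term on the left and produces exactly the factors $r$ (in the first inequality) and $r^{1-a}$ (in the second).

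The main obstacle is handling the $\|v\|_{L^2(a, B_1^+)}^2$ contribution that naturally appears on the right of the trace bound from Proposition \ref{p:trace}. After unscaling this lower-order piece becomes a term of the form $r^{-1}\int_{B_r^+} y^a u^2$, which is absent from the stated corollary. To arrive at the clean form one must dominate this term by the Dirichlet energy, i.e.\ invoke a weighted Poincar\'e inequality $\int_{B_r^+} y^a u^2 \leq C r^2 \int_{B_r^+} y^a |\nabla u|^2$ (valid under an appropriate vanishing hypothesis on $u$, such as on a portion of the boundary of positive capacity, which is implicit in the applications of the corollary in later sections). Once this Poincar\'e bound is in hand, the rest of the argument is pure dimensional bookkeeping.
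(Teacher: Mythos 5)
Your approach is exactly the paper's: the paper's entire proof of Corollary~\ref{c:trace} is the one sentence ``By utilizing rescaling and Proposition~\ref{p:trace} on $B_1$,'' and your dilation computation supplies the bookkeeping that sentence elides; the scaling exponents $r^{-n-a}$, $r^{1-n-a}$, $r^{-n}$ you derive are correct. The lower-order term you flag is a genuine imprecision in the statement as written: both inequalities fail for constant $u$ (zero right-hand side, positive left-hand side), so one either needs the full weighted $H^1$ norm on the right, or a weighted Poincar\'e inequality under an additional vanishing hypothesis, as you propose. Note that in the paper's principal application of the corollary, the Almgren frequency computation in Section~\ref{s:freereg}, the function is centered at a free boundary point so $u(0,0)=0$; but a single point has zero weighted $H^1(a,\cdot)$-capacity in $\R^{n+1}$, so this does not by itself yield a Poincar\'e inequality, and the issue you identify is not automatically resolved by context. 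The safe reading is to keep the full $H^1(a,B_r^+)$ norm on the right-hand side, which carries the same leading powers of $r$ and still suffices for the estimate $(1-C\lambda r^{1-a})N(r)\le\tilde N(r)$ that the corollary is used for.
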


 We will also need the following Hopf type Lemma
  \begin{lemma}  \label{l:hopf}
   Let $v$ be a non-constant $a$-harmonic in $B_r^+$ for some $r>0$. Assume $v$ achieves its minimimum at $(x_0,0) \in B_{r}'$. Then 
    \[
     \lim_{y \to 0} \frac{v(x_0,y)}{y^{1-a}} >0. 
    \]
  \end{lemma}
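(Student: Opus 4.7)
The plan is to normalize and then compare $v$ against an explicit $a$-harmonic barrier.

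After translating in $x$ I take $x_0=0$; after subtracting the constant $v(0,0)$ I obtain a non-constant $a$-harmonic function, still denoted $v$, with $v\ge 0$ on $B_r^+$ and $v(0,0)=0$. It then suffices to produce $\epsilon>0$ with $v(0,y)\ge \epsilon y^{1-a}$ for all sufficiently small $y>0$. Because $\operatorname{div}(y^a\nabla\cdot)$ is uniformly elliptic on every compact subset of $\{y>0\}$, the classical strong minimum principle forces $v>0$ throughout $B_r^+\cap\{y>0\}$; in particular $\min_K v>0$ on every compact $K\subset B_r^+\cap\{y>0\}$.

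The key step is the barrier
\[
 w(x,y):=\epsilon\, y^{1-a}\bigl(1-\alpha|x|^2+\delta y^2\bigr),\qquad \delta:=n\alpha/(3-a),
\]
with $\alpha,\epsilon>0$ to be chosen. Using the identity
\[
 \operatorname{div}(y^a\nabla(y^{1-a}\phi))=y\Delta_x\phi+(2-a)\phi_y+y\phi_{yy},
\]
a direct computation shows that this choice of $\delta$ makes $w$ itself $a$-harmonic. Fix $\rho<r$ and $\alpha>1/\rho^2$, and set $y_0^2:=(\alpha\rho^2-1)/(\alpha+\delta)\in(0,\rho^2)$. On the thin part of $\partial B_\rho^+$, $w=0\le v$. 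On the upper hemisphere $|x|^2+y^2=\rho^2$, substituting $|x|^2=\rho^2-y^2$ reduces $w$ to $\epsilon y^{1-a}((1-\alpha\rho^2)+(\alpha+\delta)y^2)$, which is $\le 0$ for $y\le y_0$ and at most $\epsilon\rho^{1-a}(1+\delta\rho^2)$ for $y\ge y_0$. Since $m:=\min\{v(x,y):|(x,y)|=\rho,\ y\ge y_0\}>0$, the choice $\epsilon:=m/[\rho^{1-a}(1+\delta\rho^2)]$ gives $w\le v$ on all of $\partial B_\rho^+$.

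The weak maximum principle for the degenerate elliptic operator $\operatorname{div}(y^a\nabla\cdot)$, valid because $y^a$ is an $A_2$ weight (cf.\ \cite{FKJ}), then yields $w\le v$ throughout $B_\rho^+$; evaluating at $x=0$ gives $v(0,y)\ge \epsilon y^{1-a}(1+\delta y^2)$, and letting $y\to 0$ produces $\liminf_{y\to 0} v(0,y)/y^{1-a}\ge\epsilon>0$, which is the claim. The main difficulty in carrying out this plan is the barrier itself: it must be $a$-harmonic, vanish on the thin space, and be non-positive on the equatorial portion of the hemisphere (where $v$ may itself degenerate to zero), while still carrying the correct $y^{1-a}$ growth along the vertical axis at the minimum point. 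The role of the $\delta y^2$ correction is precisely to balance the $-\alpha|x|^2$ term so that these three requirements — annihilation by $L_a$, the sign condition on $\partial B_\rho^+$, and the desired axial growth — hold simultaneously.
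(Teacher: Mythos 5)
Your proof is correct, and it takes a genuinely different route from the paper's. The paper compares $v$ against an \emph{abstract} $a$-harmonic function $w$ vanishing on the thin space, then invokes the Boundary Harnack Principle of \cite{crs10} to conclude that $w/y^{1-a}$ has a positive lower bound near $(x_0,0)$; the accompanying remark notes that one could instead use the power-series representation for odd $a$-harmonic functions from \cite{a13}. You instead build an \emph{explicit} $a$-harmonic barrier $w=\epsilon\,y^{1-a}(1-\alpha|x|^2+\delta y^2)$, and I have checked that with $\delta=n\alpha/(3-a)$ the computation $\operatorname{div}(y^a\nabla(y^{1-a}\phi))=y\Delta_x\phi+(2-a)\phi_y+y\phi_{yy}$ does indeed give $L_a w=0$; the sign analysis on $\partial B_\rho^+$ and the use of the $A_2$ comparison principle of \cite{FKJ} are all in order. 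This is closer in spirit to the classical Hopf-lemma proof and buys you a more self-contained argument: it replaces the fairly heavy Boundary Harnack machinery with a one-line verification that a single polynomial-in-$\phi$ barrier is $a$-harmonic, plus the strong minimum principle in $\{y>0\}$ (where the operator is uniformly elliptic) to secure $m>0$. One small point, which you already flag: what you establish is $\liminf_{y\to 0}v(x_0,y)/y^{1-a}\ge\epsilon>0$; the paper's statement writes $\lim$, but this is exactly what is used downstream in Proposition \ref{p:sub}, and the paper's own proof also really produces only a lower bound, so your formulation is at the same level of rigor.
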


  \begin{proof}
   By subtracting a constant we may assume $v(x_0,0)=0$ and therefore $v \geq 0$.  Let $w$ 
   (not identically zero) be an $a$-harmonic function satisfying $w(x,0)=0$ and $w\leq v$ on $\partial B_r$. By the Boundary Harnack Principle for 
   $a$-harmonic functions stated in \cite{crs10} we have for $\rho < $dist$(x_0, \partial B_r)/2$
    \[
     \sup_{ B_{\rho}(x_0,0)} \frac{w}{y^{1-a}} \leq C(r) \inf_{ B_{\rho}(x_0,0)} \frac{w}{y^{1-a}}.
    \]
   Then 
    \[
     \inf_{ B_{\rho}(x_0,0)} \frac{w(x,y)}{y^{1-a}} >0,
    \]
   and so  
    \[
     \lim_{y \to 0}\frac{v(x_0,y)}{y^{1-a}}  \geq  \lim_{y \to 0} \frac{w(x_0,y)}{y^{1-a}} > 0. 
    \]
  \end{proof}
 
  \begin{remark}
   The Boundary Harnack Priniple is a powerful tool, but is not necessary to prove Lemma \ref{l:hopf}. One can for instance use the power series
   representation for odd $a$-harmonic functions shown in \cite{a13} to achieve the same result for $w$ in the above proof and hence for $v$. 
  \end{remark}

\section{Existence}  \label{s:e}
 In this section we prove existence of solutions to \eqref{e:p}. To obtain an eigenfunction we consider minimizing
 the fractional energy 
  \begin{equation}  \label{e:energy}
   D(u):=\int_{\Omega}{u(- \Delta)^s u}
  \end{equation}
 subject to the constraint  
  \begin{equation}  \label{e:constraint}
   G(u):=\int_{\Omega}{(u - \gamma)_+} = c
  \end{equation}   
 where $c, \gamma >0$ are two fixed constants. Using the extension mentioned in Section \ref{s:fl}, this is equivalent to minimizing
  \[
  \iint\limits_{\Omega \times \R^+} y^a |\nabla w|^2  \ dx \ dy ,
  \]
 subject to the constraint \eqref{e:constraint}.
 \begin{lemma}
  There exists a minimizer of \eqref{e:energy} subject to the constraint \eqref{e:constraint}.
 \end{lemma}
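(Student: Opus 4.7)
The plan is to carry out the direct method of the calculus of variations on the extension problem. Work in the weighted Sobolev space
\[
 X := \{ w \in H^1(a, \Omega \times \R^+) : w = 0 \text{ on } \partial\Omega \times \R^+, \ w(x,y) \to 0 \text{ as } y \to \infty \},
\]
equipped with the norm $\|w\|_X^2 = \iint_{\Omega \times \R^+} y^a |\nabla w|^2$. The first point is that this is indeed a norm on $X$: since $w$ vanishes on the lateral boundary, a weighted Poincaré inequality (obtained by integration along horizontal slices, using that $\Omega$ is bounded) gives $\iint y^a w^2 \lesssim \iint y^a |\nabla w|^2$, so $X$ is a Hilbert space. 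Also the infimum of $\|w\|_X^2$ over the constraint set $\{G(w) = c\}$ is nonnegative and, by testing with a smooth bump scaled so that $\int (w-\gamma)_+ = c$, is finite.

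Next I would take a minimizing sequence $\{w_k\} \subset X$ with $G(w_k) = c$ and $\|w_k\|_X^2 \to \inf$. By the Poincaré inequality above, $\{w_k\}$ is bounded in the full $H^1(a,\cdot)$ norm on any slab $\Omega \times (0,M)$, so a diagonal argument yields a subsequence (still denoted $w_k$) converging weakly in $H^1(a, \Omega \times (0,M))$ for every $M$ to some $w \in X$. Weak lower semicontinuity of the Dirichlet-type integral gives
\[
 \|w\|_X^2 \leq \liminf_{k \to \infty} \|w_k\|_X^2.
\]

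The decisive step is verifying that $w$ still lies on the constraint surface, that is $G(w) = c$. For this I would invoke the compact trace operator $T_2 : H^1(a, U) \hookrightarrow L^2(U')$ from Proposition \ref{p:trace}, applied on $U = \Omega \times (0,1)$ (whose $U'$ is essentially $\Omega$). Compactness upgrades the weak convergence of $w_k$ in $H^1(a,U)$ to strong convergence of $w_k(\cdot,0)$ in $L^2(\Omega)$, and since $t \mapsto (t-\gamma)_+$ is $1$-Lipschitz we get
\[
 \int_\Omega (w(\cdot,0) - \gamma)_+ = \lim_{k \to \infty} \int_\Omega (w_k(\cdot,0) - \gamma)_+ = c,
\]
so $w$ is admissible and hence a minimizer.

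The main obstacle I expect is the unboundedness of $\Omega \times \R^+$: one must ensure both that the minimizing sequence does not lose mass at infinity in $y$ and that the weak limit lies in $X$ with the decay condition at $y = \infty$. Because the constraint $G$ only involves the trace on $\{y=0\}$, this can be handled by truncation, replacing each $w_k$ by $w_k \, \eta_M(y)$ for a suitable cutoff $\eta_M$ and noting this does not increase the energy in the limit $M \to \infty$; the energy bound then forces $w$ to decay and lie in $X$. Once this technical point is dispatched, weak lower semicontinuity and compact trace close the argument.
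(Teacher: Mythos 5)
Your proof is essentially the paper's argument, just written out in full: the paper also reduces to minimizing the weighted Dirichlet energy of the extension and invokes the compact trace of Proposition \ref{p:trace}/Corollary \ref{c:trace} to pass the constraint to the weak limit, stating simply that existence ``then immediately follows.'' You supply the details the paper omits (weighted Poincar\'e to make $X$ a Hilbert space, weak lower semicontinuity, Lipschitz continuity of $t\mapsto(t-\gamma)_+$ to preserve the constraint, and the truncation device for the unbounded $y$-direction), so the content matches the paper's route.
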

 
 \begin{proof}
  Minimizing \eqref{e:energy} is equivalent to minimizing
   \[
    \iint\limits_{\Omega\times \R^+} y^a |\nabla w|^2  \ dx \ dy
   \]
  with $w(x,0)=u(x)$ and $w(x,y) \to 0$ as $y \to \infty$. 
  From Corollary \ref{c:trace} it follows that $L^2(\Omega)$ is compactly contained in $H^1(a,\Omega \times \R^+)$. The existence of a minimizer then immediately follows.  
 \end{proof}
  We note that the extensions is not necessary to prove a compactness theorem. Using only the spectral decomposition there is an elementary proof using power series
  that if a sequence $u_k$, is bounded in $H^s(\Omega)$, then there exists $u_0$ and a subsequence such that $u_k \rightharpoonup u_0$ in $H^s(\Omega)$ 
  and $u_k \to u_0$ in 
  $L^2(\Omega)$. 
   
 \begin{lemma}
  For a domain $\Omega$ and fixed constant $c$ there exists a solution to \eqref{e:p} for some $\lambda>0$. 
 \end{lemma}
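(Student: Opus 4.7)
The plan is to extract the equation from the minimizer of the preceding lemma via the method of Lagrange multipliers. Let $u$ denote such a minimizer, $w$ its $a$-harmonic extension to $\Omega\times\R^+$, and fix an arbitrary $\phi\in C_c^\infty(\Omega)$ with an $H^1(a,\Omega\times\R^+)$ extension $\psi$ satisfying $\psi(\cdot,0)=\phi$. The first variation of the energy is
\[
\left.\frac{d}{dt}\right|_{t=0}\iint_{\Omega\times\R^+} y^a|\nabla(w+t\psi)|^2 \;=\; 2\iint_{\Omega\times\R^+} y^a\nabla w\cdot\nabla\psi,
\]
which, via $\div(y^a\nabla w)=0$ together with the Dirichlet--to--Neumann identity \eqref{e:extend}, reduces to a boundary pairing between $(-\Delta)^s u$ and $\phi$. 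A parallel computation of the first variation of $G$ produces the pairing against $(u-\gamma)_+$ after a small regularization argument to cope with the non-smoothness of the positive part.

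The Lagrange multiplier theorem then yields a real constant $\lambda$ such that $(-\Delta)^s u=\lambda(u-\gamma)_+$ in $\Omega$, with $u\equiv 0$ on $\partial\Omega$ inherited from the extension formulation. To see that $\lambda>0$, test the equation against $u-\gamma$ and also against $1$: the latter gives $\int_\Omega(-\Delta)^s u=\lambda c$, and combining with the former yields
\[
D(u)\;=\;\lambda\Bigl(\gamma c+\int_\Omega(u-\gamma)_+^2\Bigr).
\]
Since the constraint $G(u)=c>0$ forces $u$ to exceed $\gamma$ on a set of positive measure, $u\not\equiv 0$ and hence $D(u)>0$, so $\lambda>0$ as required.

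The main technical obstacle is the differentiability of the constraint at $u$, which a naive treatment misses along the coincidence set $\{u=\gamma\}$. The standard fix is to replace $(u-\gamma)_+$ in the constraint by a smoothed family (for instance $\sqrt{(u-\gamma)_+^2+\varepsilon^2}-\varepsilon$), derive the Lagrange multiplier identity for each smoothed problem, and pass to the limit using the $H^1(a)$-bound on the minimizer together with the compact embedding from Corollary \ref{c:trace}; alternatively, one may argue a priori that $\{u=\gamma\}$ has zero Lebesgue measure, so that $G$ is Gateaux differentiable in a neighborhood of $u$ in $L^2$. Either route handles the non-smoothness, and the remainder of the proof is routine.
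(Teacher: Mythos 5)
Your proposal follows the paper's route exactly: extract the Euler--Lagrange equation $(-\Delta)^s u=\lambda(u-\gamma)_+$ from the constrained minimizer via a Lagrange multiplier argument. The paper's proof is much terser --- it simply records Fr\'echet derivatives for $D$ and $G$, quotes the Lagrange multiplier theorem from \cite{z95}, and asserts $\lambda>0$ --- whereas you supply two ingredients the paper leaves implicit: the sign of $\lambda$ (your test against $1$ and against $u-\gamma$, using $D(u)>0$ since $u\not\equiv 0$, is exactly what is needed, as the abstract theorem alone only gives $\lambda\in\R$), and a treatment of the non-differentiability of the positive part along $\{u=\gamma\}$ via smoothing or a measure-zero argument. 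Both additions close real gaps. One small point worth noticing: the variation you (and the paper) record, giving a pairing against $(u-\gamma)_+$, is the derivative of $\int_\Omega (u-\gamma)_+^2$, not of the constraint $\int_\Omega (u-\gamma)_+$ as \eqref{e:constraint} is literally written; the constraint should carry a square for the Lagrange identity to produce \eqref{e:p}, and this is almost certainly a typo in the paper that your write-up silently carries forward.
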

 
 \begin{proof}
  Since the functionals $D: H^s \to \R$ and $G:H^{s/2} \to \R$ have the Frechet derivatives 
   \[
    D'(u)= (- \Delta)^{s/2}u \qquad G'(u):= 1/2\int_{\Omega}{(u - \gamma)_+}
   \]
  then there exists $\lambda>0$ such that for a minimizer of $D(u)$ subject to the constraint $G(u)=c$ satisfies  
   \[
    D'(u) = \lambda G'(u).
   \]
  See \cite{z95} for a discussion of Lagrange Multipliers with Frechet derivatives. 
 \end{proof}
 
 The above Lemma shows existence for some $\lambda>0$. In \cite{t77} existence is shown for fixed $\lambda$ belonging to a correct range. 
 It would be of interest to show existence for fixed $\lambda$ for this fractional problem as well. Since this paper focuses on properties
 of the free boundary, we chose to only give a quick proof for some $\lambda$ to simply demonstrate that our class of solutions we study is nonempty.

 
 

\section{Interior Regularity of Solutions}  \label{s:intreg}
 In this section we obtain the interior regularity of solutions which will enable us to obtain regularity of
 the free boundary where the gradient does not vanish. Since we are dealing with an eigenvalue equation, 
 we use a bootstrap technique. Obtaining regularity for $u$ 
 passes the same regularity to $\lambda u^+$ up to Lipschitz regularity. 
 Regularity for $\lambda u^+ = ``(-\Delta)^s u ''$ allows us to obtain higher regularity for $u$. To use
 the bootstrap technique we utilize the following Proposition from \cite{S07}.
 
 \begin{proposition}   \label{p:holder}
  Let $w = (- \Delta)^s u$. Assume $w \in C^{0,\alpha}(\R^n)$ and $u \in L^{\infty}$ for $\alpha \in (0,1)$ and 
  $s > 0$. Then
   \[
    \begin{aligned}
     &\text{If } \alpha + 2s \leq 1 & & \text{  then  } u \in C^{0,\alpha+2s}(\R^n), \\
     &\text{If } \alpha + 2s >1     & & \text{  then  } u \in C^{1,\alpha+2s-1}(\R^n). 
    \end{aligned}
   \]
 \end{proposition}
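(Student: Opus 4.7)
The plan is to invert $(-\Delta)^s$ locally using the Riesz potential and then combine classical Hölder estimates for that potential with interior smoothness of $s$-harmonic functions. Since the conclusion is a pointwise regularity statement, it suffices to establish the claimed smoothness on a neighborhood of an arbitrary base point; by translation invariance I take the base point to be $0$.

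First I would pick a cutoff $\eta \in C_c^\infty(\R^n)$ with $\eta \equiv 1$ on $B_1$ and $\operatorname{supp} \eta \subset B_2$ and split $w = \eta w + (1-\eta)w$. Using the Riesz kernel $I_{2s}(x) = c_{n,s}\, |x|^{2s-n}$, which distributionally satisfies $(-\Delta)^s I_{2s} = \delta_0$, I define
\[
v := I_{2s} * (\eta w).
\]
The convolution converges since $\eta w$ is bounded with compact support, $v$ is bounded, and $(-\Delta)^s v = \eta w$ on all of $\R^n$. Classical Hölder estimates for Riesz potentials of compactly supported $C^{0,\alpha}$ data then give
\[
v \in
\begin{cases}
C^{0,\alpha+2s}(\R^n), & \alpha + 2s \leq 1,\\
C^{1,\alpha+2s-1}(\R^n), & \alpha + 2s > 1,
\end{cases}
\]
which is exactly the regularity claimed for $u$.

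Next I set $h := u - v$. Then $(-\Delta)^s h = (1-\eta)w$ on $\R^n$, and since $1 - \eta$ vanishes on $B_1$, the function $h$ is $s$-harmonic in $B_1$. Because $u \in L^\infty(\R^n)$ and $v$ is bounded, $h$ is globally bounded, so interior smoothness of $s$-harmonic functions (see e.g.\ \cite{cs07}, or via the extension of Section~\ref{s:fl}) yields $h \in C^\infty(B_{1/2})$. Adding, $u = v + h$ inherits the regularity of $v$ on $B_{1/2}$, and translating the construction to arbitrary base points promotes this to a global statement on $\R^n$.

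The main obstacle is the middle step: proving that convolution with $I_{2s}$ raises the Hölder exponent of a compactly supported function by exactly $2s$ derivatives. Extracting the non-integer gain requires a careful kernel argument, subtracting an appropriate Taylor polynomial of $\eta w$ at the base point and exploiting the cancellation of $I_{2s}(x+h)-I_{2s}(x)$ against the singularity. The threshold case $\alpha+2s=1$, where the conclusion is Lipschitz rather than a proper Hölder class, needs separate bookkeeping; above the threshold, one must in addition control a first-order difference quotient of $v$ and then apply the same Hölder gain argument to that quotient.
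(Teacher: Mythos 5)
The paper does not prove Proposition~\ref{p:holder}: it is cited directly from Silvestre~\cite{S07} (his Proposition~2.8) and applied as a black box in the bootstrap of Theorem~\ref{t:solreg}, so there is no in-paper proof to compare against. Your strategy---cut off $w$, invert $(-\Delta)^s$ on the localized piece via the Riesz kernel $I_{2s}$, transfer the H\"older gain for Riesz potentials, and dispose of the remainder $h=u-v$ as an $s$-harmonic function smooth in the interior---is a sound plan and close in spirit to Silvestre's own argument, which likewise rests on Riesz-potential estimates. I'll flag two places where the sketch, as written, does not yet constitute a proof.

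First, the assertion that $v = I_{2s}*(\eta w)$ is bounded fails when $2s\geq n$ (in the range of this paper this means $n=1$ and $s \geq 1/2$): there $I_{2s}(x)=c_{n,s}|x|^{2s-n}$ does not decay at infinity and $v$ grows sublinearly or logarithmically. The fix is standard---$h=u-v$ then still satisfies $\int_{\R^n} |h(x)|(1+|x|)^{-n-2s}\,dx<\infty$, which is what the interior regularity theory for $s$-harmonic functions actually requires, not boundedness---but the step as stated is incorrect. Second, and more substantively, the ``classical H\"older estimate for Riesz potentials'' that you invoke and then defer to the closing paragraph \emph{is} the whole content of the proposition; the rest is routine localization. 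You are right to identify it as the main obstacle, but the proposal stops short of a proof. The borderline case $\alpha+2s=1$ is genuinely delicate: convolution with $I_{2s}$ puts $v$ in the Zygmund class $\Lambda^1$ rather than in $C^{0,1}$, so obtaining the statement exactly as written there requires either extra care or a slight downgrade of the exponent (which, for the bootstrap in Theorem~\ref{t:solreg}, would be enough).
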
 
 
 Using Proposition \ref{p:holder} we are able to obtain the following interior regularity result. 
 \begin{theorem}  \label{t:solreg}
  Let $u$ be a solution to \eqref{e:eigen}. Then for any $K \Subset \Omega$
    \[
     \begin{aligned}
      &\text{If } s\leq 1/2,  & &\text{ then  }   u \in C^{1,\alpha}(K) \text{  for every  } \alpha<2s,\\
      &\text{If } s > 1/2,   & &\text{ then  }   u \in C^{2,\alpha}(K) \text{  for every  } \alpha<2s-1
     \end{aligned}
    \]
 \end{theorem}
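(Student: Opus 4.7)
The plan is a bootstrap via Proposition \ref{p:holder}, driven by the elementary observation that $|u_+(x) - u_+(y)| \le |u(x) - u(y)|$, which propagates Hölder regularity from $u$ to the nonlinearity $\lambda u_+$ up to (but not past) Lipschitz regularity.

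To initialize, I would use that $u$ is bounded (from the variational construction in Section \ref{s:e}), so $\lambda u_+ \in L^\infty$. Standard interior regularity for the fractional Laplacian with bounded right-hand side then gives $u \in C^{0,\alpha_0}_{\mathrm{loc}}(\Omega)$ for some $\alpha_0 > 0$. Since Proposition \ref{p:holder} is stated on $\R^n$, I would localize by fixing $K \Subset K' \Subset \Omega$, replacing $u$ by $\chi u$ for a smooth cutoff $\chi \equiv 1$ on $K'$, and absorbing the resulting nonlocal commutator as a smooth bounded forcing term on $K'$. After this localization, the Proposition applies.

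Then I would iterate. At each step, if $u \in C^{0,\alpha_k}(K')$, then $u_+ \in C^{0,\alpha_k}(K')$ as well, and Proposition \ref{p:holder} upgrades to exponent $\alpha_{k+1} = \alpha_k + 2s$, passing through the $C^{1,\cdot}$ alternative once $\alpha_k + 2s > 1$. After finitely many steps, $u$ is locally Lipschitz on $K$. At this point $u_+$ is likewise Lipschitz, so $u_+ \in C^{0, 1-\epsilon}$ for every $\epsilon > 0$. Applying Proposition \ref{p:holder} one final time with $\alpha = 1 - \epsilon$ and letting $\epsilon \downarrow 0$ yields $u \in C^{1,\alpha}(K)$ for every $\alpha < 2s$ when $s \le 1/2$, and---reinterpreting the conclusion $C^{1, \alpha + 2s - 1}$ as $C^{2, \alpha + 2s - 2}$ when the exponent exceeds $1$, which happens precisely in this regime---$u \in C^{2,\alpha}(K)$ for every $\alpha < 2s - 1$ when $s > 1/2$.

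The conceptual bottleneck is not a technical difficulty but a genuine ceiling on the regularity of $u_+$: at free boundary points where $\nabla u \neq 0$, $u_+$ has a corner and cannot be better than Lipschitz. This ceiling is exactly what produces the precise exponents $2s - \epsilon$ and $2s - 1 - \epsilon$ in the two regimes and prevents any further bootstrap. The only real technical step is the localization; once that is in place, the proof is a finite iteration of a single regularity lemma combined with the Lipschitz cap on $u_+$.
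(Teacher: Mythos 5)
Your bootstrap structure via Proposition \ref{p:holder} is the same engine the paper uses, and your accounting of the Lipschitz ceiling on $u_+$ and the resulting exponents is correct. However, there are two genuine gaps. First, the localization. You propose to apply Proposition \ref{p:holder} after replacing $u$ by $\chi u$ and ``absorbing the resulting nonlocal commutator as a smooth bounded forcing term,'' but this implicitly treats $(-\Delta)^s$ as the integral fractional Laplacian on $\R^n$. In this paper $(-\Delta)^s$ is the \emph{spectral} fractional Laplacian on the bounded domain $\Omega$, defined through the Dirichlet eigenfunctions, and the equation \eqref{e:eigen} is its extension (Caffarelli--Silvestre/Stinga--Torrea) reformulation. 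The spectral operator applied to $\chi u$ is not related to the global integral operator applied to $\chi u$ in any way that makes your commutator computation go through, so the hypothesis of Proposition \ref{p:holder} (which lives on all of $\R^n$) is not verified by your localization. The paper instead sets $u_0 := (-\Delta)^{-s}_{\R^n}(\lambda u_+ \mathbf{1}_K)$ via the Riesz potential, uses its $a$-harmonic extension, and observes that $u - u_0$ has vanishing weighted Neumann data on $K$, hence is $a$-harmonic on $K\times\R$ after even reflection and therefore smooth there. This cleanly transfers regularity from the globally-defined $u_0$ (to which Proposition \ref{p:holder} does apply) back to $u$ on compact subsets, precisely because the comparison is done at the level of the Neumann trace of the extension rather than by cutting off $u$.

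Second, your initialization claims that boundedness of $u$ follows ``from the variational construction in Section \ref{s:e},'' but Section \ref{s:e} only produces a minimizer in $H^s(\Omega)$, i.e.\ $u_+\in L^2$; boundedness is not established there. The paper gets $L^\infty$ by iterating Riesz potential embeddings (Hardy--Littlewood--Sobolev) starting from $L^2$, gaining integrability at each step, and only then starting the H\"older bootstrap. That intermediate $L^p$ iteration is a necessary part of the proof and should not be presupposed. Once both of these are repaired---use the Riesz potential subtraction for localization and the $L^p$ ladder for initialization---your iteration and the exponent bookkeeping at the Lipschitz cap are exactly as in the paper.
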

 
 \begin{proof}
  Throughout the proof $K$ will be any compact subset of $\Omega$. Each time we obtain higher regularity in 
  the bootstrap technique we will pass to a smaller subdomain. By abuse of notation we will continue to call
  the smaller set $K$. The Holder norms will of course depend on $K$.
  To utilize Proposition \ref{p:holder}, which applies to the fractional Laplacian whose domain is functions
  defined on all of $\R^n$, we use the Reisz potential as well as the $a$-harmonic extension of $u$ in 
  $\R^n \times \R^+$.  
   \[
    u_0(z) := (-\Delta)_{\R^n}^{-s} \lambda u^+ = \int_{K}{\frac{\lambda u^+(y)}{|z-y|^{n-2s}} \ dy}. 
   \] 
  If we consider the variable $z \in \R^n \times \R^+$, then
  $u_0$ is $a$-harmonic in $\R^n \times \R^+$. Furthermore  
   \[
    \lim_{y \to 0} y^a \partial_y (u - u_0)(x,y) =0 \text{  for any } x \in K
   \]
  By even reflection in the $y$ variable $u - u_0$ is $a$-harmonic in the 
  cylinder $K \times \R$. $a$-harmonic functions have a power series representation and hence are smooth \cite{a13}. 
  Therefore, since $u-u_0$ is smooth on any $\tilde{K} \Subset K$, any regularity obtained for $u_0$ on $\tilde{K}$ is 
  passed to $u$ on $\tilde{K}$.   
  Initially we only know that $u^+ \in L^2(\Omega)$. From the theory of Reisz potentials, 
  $u_0 \in L^{q}(K)$  for any $1/2 - 1/q < 2s/n$, so $u \in L^q(\tilde{K})$, and so $u^+ \in L^q(\tilde{K})$,
  and we rename $\tilde{K}$ as $K$ and continue the process. Then using again 
  imbeddings of Reisz potentials we obtain that after finitely many iterations (depending on $s$), 
  $u \in L^{\infty}(K)$ and hence $u^+$ as well. Then from \cite{S07} we obtain that 
   \[
    \begin{aligned}
     &u_0 \in C^{0,\alpha}(\R^n) & &\text{ if } \alpha < 2s \leq 1\\
     &u_0 \in C^{1,\alpha}(\R^n) & &\text{ if } \alpha < 2s -1 \text{  and  } 2s>1
    \end{aligned}
   \]
  Again, we obtain the same regularity for $u$ inside $K$. Having obtained initial Holder regularity
  for $u^+$, we obtain higher Holder regularity for $u$ by using Proposition \ref{p:holder}. After finitely 
  many iterations (again depending on $s$) 
  we obtain the conclusion
  of the theorem. Lipschitz regularity is the most we can know for $u^+$, and this is when the iteration process stops.  
 \end{proof}

\section{Topology of the Free Boundary}  \label{s:top}
 For the original local plasma problem studied in \cite{ks78}, the free boundary is exactly 
  \[
   \partial \{u>0\} = \partial \{u<0\} = \{u=0\}.
  \]
 That these two boundaries are the same follows from the maximum and minimum principle since $\Delta u =0$ in 
 $\{u<0\}$ and $\Delta u \leq 0$ in $\{u>0\}$. The situation is different in the nonlocal/fractional case: 
 we cannot apply the same minimum principle to solutions of 
  \[
    \lim_{y \to 0} y^a u_y(x,y) \leq 0
  \]
 since it is possible to have a local minimum and still satisfy the above inequality. It may be possible to construct solutions to \eqref{e:eigen} that satisfy 
  \[
   \partial \{u( \ \cdot \ ,0)<0\}\subsetneq \partial \{u( \ \cdot \ ,0)>0\}.
  \]
 We are mostly interested in the portion of the free boundary $\partial\{u( \ \cdot \ ,0)<0\}$. This next proposition gives an inclusion when we assume
 additionally \eqref{e:do}.

 \begin{proposition} \label{p:free}
   Let $u$ be a solution to \eqref{e:eigen} and assume also \eqref{e:do}, then $\partial \{u<0\} \subset \partial\{u>0\}$. In particular, if
   $u$ is a solution to \eqref{e:p}, then $\partial \{u<\gamma\} \subset \partial\{u>\gamma\}$.
 \end{proposition}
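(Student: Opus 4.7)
My plan is to argue by contradiction. Suppose $x_0 \in \partial\{u(\cdot,0) < 0\}$ but $x_0 \notin \partial\{u(\cdot,0) > 0\}$; the goal is to derive that $u(\cdot,0) \equiv 0$ on a thin neighborhood of $x_0$, which contradicts $x_0$ being a boundary point of the negativity set. The continuity guaranteed by Theorem \ref{t:solreg} gives $u(x_0,0) = 0$, and since $x_0$ is not in $\overline{\{u(\cdot,0) > 0\}}$ there must be a thin ball $B_r'(x_0)$ on which $u(\cdot,0) \leq 0$.

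The main mechanics proceed in two steps. First I would use the observation that on $B_r'(x_0)$ we have $u_+ \equiv 0$, so the thin boundary condition \eqref{e:eigen} degenerates to vanishing conormal data. Combined with the bulk equation $\operatorname{div}(y^a \nabla u) = 0$ on $B_r^+(x_0,0)$, the even reflection $\tilde u(x,y) := u(x,|y|)$ extends to an $a$-harmonic function on the full ball $B_r(x_0,0)$. Next I would bring in the monotonicity \eqref{e:do}: since $u_y \leq 0$, for fixed $x$ the map $y \mapsto u(x,y)$ is non-increasing, so $u(x,y) \leq u(x,0) \leq 0$ throughout $B_r^+(x_0,0)$. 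Thus $\tilde u \leq 0$ on $B_r(x_0,0)$ and attains its interior maximum $0$ at $(x_0,0)$. The strong maximum principle for $a$-harmonic functions (equivalently, Hopf's Lemma \ref{l:hopf} applied to $-u$: a strictly positive limit $\lim_{y \to 0}(-u(x_0,y))/y^{1-a}$ would force a nonzero conormal derivative, contradicting the vanishing thin Neumann data at $x_0$) forces $\tilde u \equiv 0$ on $B_r(x_0,0)$. In particular $u(\cdot,0) \equiv 0$ on $B_r'(x_0)$, contradicting $x_0 \in \partial\{u(\cdot,0) < 0\}$.

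For the ``in particular'' clause I would simply apply the first part to the $a$-harmonic extension of $u - \gamma$, which satisfies both \eqref{e:eigen} and \eqref{e:do} by the discussion just after \eqref{e:do}; the two level sets translate directly to $\{u < \gamma\}$ and $\{u > \gamma\}$. The only genuine subtlety is justifying that the even reflection really produces an $a$-harmonic function across the portion of the thin space where the conormal data vanishes (so that the maximum principle can be invoked across $(x_0,0)$ rather than merely in the upper half), but this is a standard consequence of the Caffarelli--Silvestre extension framework and should not cause any trouble.
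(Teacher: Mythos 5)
Your proof is correct and takes essentially the same route as the paper: pick a thin neighborhood where $u(\cdot,0)\leq 0$ so the conormal data vanishes, evenly reflect $u$ to obtain an $a$-harmonic function across the thin space, use $u_y\leq 0$ to recognize $(x_0,0)$ as an interior maximum, and apply the strong maximum principle to force $u\equiv 0$ nearby, contradicting $x_0\in\partial\{u(\cdot,0)<0\}$. The paper argues identically, citing the maximum principle of \cite{FKJ} directly rather than rephrasing it via Lemma~\ref{l:hopf}.
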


 \begin{proof}
  Let $V$ be an open subset of $\Omega$ such that $V \subset \{u( \ \cdot \ , 0)\leq 0 \}$. Suppose there exists $x_0 \in V$
  such that $(x_0,0) \in \partial \{u(\cdot,0) < 0\}$, and so $u(x_0,0)=0$ and $u$ is not constant. Now 
  $y^a u_y(x,y) = 0$ for $x \in V$. If we evenly reflect in the $y$-variable $u$ is $a$-harmonic on the domain $U \times (-\epsilon,\epsilon)$ for $\epsilon >0$.
  From \eqref{e:do} it follows that $u$ achieves an interior maximum at $(x_0,0)$, and since $u$ is not constant we obtain 
  a contradiction to the maximum principle for $a$-harmonic functions \cite{FKJ}.
 \end{proof}
 
  
  This next Proposition shows that if $u$ is a solution to \eqref{e:p}, then $u$ is strictly subharmonic - in the classical sense for the Laplacian 
  (not fractional Laplacian) on the thin space $\R^n$ -
  across the boundary $\partial \{u>\gamma \}$ which is not 
  true when $s=1$. This illustrates why one cannot hope for a strong minimum principle in $\{u > \gamma\}$. 
  
  \begin{proposition}  \label{p:sub}
   Let $u$ be a solution to \eqref{e:p} with $1/2<s<1$. Then there exists an open set $U$ (in the topology of $\Omega$)
    containing 
   $ \{u(\cdot,0) = \gamma\}$ such that 
    \[
     \Delta u(x,0) > 0 \text{  for every  } x \in U,
    \]
   where $\Delta$ is the classical n-dimensional Laplacian on the thin space $\R^n$.
  \end{proposition}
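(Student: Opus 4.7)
My strategy is to reduce the claim to the pointwise strict inequality $\Delta u(x_0, 0) > 0$ at every $x_0$ in the level set $\{u(\cdot, 0) = \gamma\}$ and then take $U := \{x \in \Omega : \Delta u(x, 0) > 0\}$. Since $s > 1/2$, Theorem \ref{t:solreg} gives $u \in C^{2,\alpha}$ in any compact subdomain, so $\Delta u(\cdot, 0)$ is continuous and $U$ is automatically open once the pointwise strict inequality is established.

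The idea behind the pointwise bound is to exploit the spectral factorization $(-\Delta) = (-\Delta)^{1-s}(-\Delta)^s$, which combined with \eqref{e:p} gives the pointwise interior identity
\[
-\Delta u(x, 0) = \lambda \, (-\Delta)^{1-s}(u - \gamma)_+(x).
\]
Since $0 < 1 - s < 1/2$, the spectral fractional Laplacian admits Bochner's subordination representation
\[
(-\Delta)^{1-s} f(x_0) = \frac{1}{\Gamma(s-1)} \int_0^{\infty} \bigl( e^{t \Delta_D} f(x_0) - f(x_0) \bigr) t^{s-2} \, dt,
\]
where $e^{t\Delta_D}$ denotes the Dirichlet heat semigroup on $\Omega$ and $\Gamma(s-1) < 0$. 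I would apply this to $f := (u - \gamma)_+$ at a point $x_0$ with $u(x_0, 0) = \gamma$, so that $f(x_0) = 0$. The function $f$ is nonnegative and not identically zero (otherwise $(-\Delta)^s u = 0$ with zero Dirichlet data would force $u \equiv 0$ and the level set would be empty), so positivity and the strong maximum principle for $e^{t\Delta_D}$ give $e^{t \Delta_D} f(x_0) > 0$ for every $t > 0$. Combined with $\Gamma(s-1) < 0$, this yields $(-\Delta)^{1-s} f(x_0) < 0$, and hence $\Delta u(x_0, 0) > 0$, as desired.

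The main technical hurdle is making Bochner's formula rigorous pointwise for the spectral Laplacian on a bounded domain: one must use the Lipschitz regularity of $f$ from Theorem \ref{t:solreg}, together with standard Dirichlet heat-kernel bounds, to guarantee absolute convergence of the integral and pointwise agreement with the spectral definition of $(-\Delta)^{1-s}$. A possible alternative route is to argue directly from the extension: at $(x_0, 0)$ with $u(x_0, 0) = \gamma$ one has $\lim_{y \to 0} y^a u_y(x_0, y) = 0$, so integrating $\operatorname{div}(y^a \nabla u) = 0$ in $y$ yields $u_y(x_0, y) \approx -\tfrac{\Delta_x u(x_0, 0)}{1+a}\, y$ near $y = 0$; combined with \eqref{e:do} this immediately gives $\Delta_x u(x_0, 0) \geq 0$, but upgrading to strict inequality would then require a separate Hopf-type argument to rule out an anomalously flat expansion at the free boundary.
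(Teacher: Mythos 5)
Your reduction to a pointwise strict inequality at level-set points via the $C^{2,\alpha}$ regularity from Theorem \ref{t:solreg} is exactly the first step of the paper's proof, but the mechanism you use to obtain the strict sign is genuinely different. The paper works entirely on the extension side: it sets $v = c_a y^a u_y$, observes from \eqref{e:do} that $v \geq 0$ with $v(x_0,0)=0$ whenever $u(x_0,0)=\gamma$, and then invokes the Hopf lemma for $-a$-harmonic functions (Lemma \ref{l:hopf}) to get $0 < \lim_{y\to 0} y^{-a} v_y(x_0,y) = \tfrac{1}{c_a}(-\Delta)^{1-s}(-\Delta)^s u(x_0) = \tfrac{1}{c_a}(-\Delta)u(x_0)$, and since $c_a<0$ the conclusion follows. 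You instead stay on the base domain, use the same spectral factorization $(-\Delta) = (-\Delta)^{1-s}(-\Delta)^s$, and replace the Hopf lemma with Bochner subordination for $(-\Delta)^{1-s}$: the sign $\Gamma(s-1)<0$ together with strong positivity of the Dirichlet heat semigroup applied to $f=(u-\gamma)_+ \not\equiv 0$, $f(x_0)=0$, yields $(-\Delta)^{1-s}f(x_0)<0$. Both routes are instances of the same heuristic — a nonnegative function vanishing at an interior point has strictly negative fractional Laplacian of order in $(0,1)$ there — but yours avoids the extension and the boundary Harnack machinery at the cost of needing a rigorous pointwise Bochner formula for a function that is merely Lipschitz. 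You correctly identify this as the technical hurdle, and in fact the exponent count confirms it is tight: with $f(x_0)=0$ and $f$ Lipschitz one gets $e^{t\Delta_D}f(x_0)\lesssim\sqrt{t}$, so $\int_0^1 t^{1/2}\,t^{s-2}\,dt$ converges exactly when $s>1/2$, which is the hypothesis of the proposition. Your second, extension-based sketch is essentially the paper's route, and your observation that it "would require a separate Hopf-type argument" to get strict inequality is precisely the role Lemma \ref{l:hopf} plays in the paper.
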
 
  
  \begin{proof}
   Since $s>1/2$, from Theorem \ref{t:solreg} we know $u$ is $C^2$ on the thin space $\R^n$, so $\Delta u(\cdot,0)$ exists in the classical sense. 
   Also, from the extension we have that 
    \[
     v(x,y) = c_a y^a u_y(x,y) \geq 0,
    \]
   where $c_a <0$ is as in \eqref{e:extend},
   and $v$ is $-a$-harmonic. Let $u(x_0,0) = \gamma$. Then 
    \[
     \lim_{y \to 0} y^a u_y(x_0,y) =0. 
    \]
   We then conclude that $(x_0,0)$ is a minimum for the $-a$-harmonic function $v$, and so from Lemma \ref{l:hopf} 
    \[
      \begin{aligned}
       0 &< \lim_{y \to 0} y^{-a} v_y(x_0,y) \\
         &= \frac{1}{c_a}(-\Delta)^{1-s} v(x_0)  \\
         &= \frac{1}{c_a}(-\Delta)^{1-s} (-\Delta)^s u(x_0)  \\
         &= \frac{1}{c_a}(-\Delta) u(x_0).
      \end{aligned}
    \]
   Since $c_a<0$, we conclude that $\Delta u(x,0) >0$ whenever $u(x,0)=0$. From the $C^2$ continuity of $u$ it follows that
   $u$ is strictly subharmonic in a neighborhood of $\{u=\gamma\}$. 
  \end{proof}

  In the next theorem, we show that if 
  $\Omega$ has symmetry in a coordinate direction, then  the same symmetry will
  be inherited in the level sets of solutions of \eqref{e:p} that arise as minimizers of \eqref{e:energy} subject
  to the constraint \eqref{e:constraint}. 
  
  \begin{theorem}  \label{t:symmetry}
   Let $u$ be a solution to \eqref{e:p} that arises as a minimizer of \eqref{e:energy} subject to the constraint
   \eqref{e:constraint}. Assume $\Omega$ is symmetric across a hyperplane $P$ orthogonal to the direction $\nu$.
   Then the level sets of $u$ are also symmetric with respect to $P$.
  \end{theorem}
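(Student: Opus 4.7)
The plan is to prove this symmetry result via Steiner symmetrization with respect to the hyperplane $P$, exploiting the variational characterization of $u$ as a minimizer of \eqref{e:energy} subject to \eqref{e:constraint}. Let $\nu$ denote the direction normal to $P$, and for a function $f$ on $\Omega$ let $f^{\ast}$ denote its Steiner symmetrization in the $\nu$-direction, so that $f^{\ast}$ is symmetric about $P$ and each one-dimensional slice of $f^{\ast}$ parallel to $\nu$ is the symmetric decreasing rearrangement of the corresponding slice of $f$. Since $\Omega$ is symmetric about $P$, if $f$ vanishes on $\partial \Omega$ then so does $f^{\ast}$.

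The first step is to verify that $u^{\ast}$ is an admissible competitor in the minimization problem. Steiner symmetrization preserves the distribution function, so $\int_{\Omega} (u^{\ast}-\gamma)_+ = \int_{\Omega} (u-\gamma)_+ = c$, meaning $u^{\ast}$ satisfies the constraint \eqref{e:constraint}. Next, I would pass to the extension picture of Section \ref{s:fl}: let $w$ be the $a$-harmonic extension of $u$ to $\Omega \times \R^+$, and define $w^{\ast}$ by applying Steiner symmetrization in the $\nu$-direction to each horizontal slice $w(\cdot,y)$. Because the weight $y^a$ depends only on $y$ (which is untouched by the $\nu$-symmetrization), the classical slicewise Steiner rearrangement inequality integrates to
\[
\iint_{\Omega \times \R^+} y^a |\nabla w^{\ast}|^2 \leq \iint_{\Omega \times \R^+} y^a |\nabla w|^2.
\]
Since the $a$-harmonic extension minimizes the weighted Dirichlet energy for fixed trace, and the trace of $w^{\ast}$ is $u^{\ast}$, we get $D(u^{\ast}) \leq D(u)$. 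The minimality of $u$ then forces equality throughout, so $u^{\ast}$ is a minimizer as well and the energy chain collapses.

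The crux of the proof is then the equality case. From $\iint y^a|\nabla w^{\ast}|^2 = \iint y^a|\nabla w|^2$ one wants to conclude that $w(\cdot,y)$ already coincides with its $\nu$-Steiner symmetrization for each $y$, and hence $u = u^{\ast}$. This requires an analogue of the Brothers--Ziemer rigidity theorem for the weighted Dirichlet energy, ruling out translates as well as the possibility that the rearrangement produces equality without the original function being symmetric (a nontrivial issue on level sets where $u$ is constant). Once rigidity is established, the equality $u = u^{\ast}$ gives $u(x) = u(R_P x)$ for the reflection $R_P$ across $P$, and hence $\{u > t\} = R_P \{u > t\}$ for every $t$. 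The anchoring provided by the symmetric boundary condition $u \equiv 0$ on $\partial \Omega$ fixes the hyperplane of symmetry as $P$ itself, eliminating the translation ambiguity.

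The main obstacle is the rigidity step: the standard Brothers--Ziemer argument is delicate even for $s=1$, and here it must be adapted to the degenerate/singular weight $y^a$ and to the fact that we symmetrize only the horizontal slices of an extension. One way around this is to observe that the energy chain forces $w^{\ast}$ to itself be $a$-harmonic in $\Omega \times \R^+$ (since it achieves the minimal extension energy for trace $u^{\ast}$), and then to combine slicewise equality cases with the unique continuation property of $a$-harmonic functions to propagate symmetry across all of $\Omega \times \R^+$, concluding symmetry of $u$ on the thin space.
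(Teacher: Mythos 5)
Your overall strategy matches the paper's: pass to the $a$-harmonic extension, Steiner symmetrize slicewise in the direction $\nu$ (which commutes with the $y^a$ weight since that weight is constant on each horizontal slice), observe that the extension of $u^*$ has energy $\leq$ the symmetrized extension's energy $\leq$ the original energy, note the constraint is preserved, and use minimality to force equality. So the first half of your argument is in good shape, and in fact your explicit remark that $D(u^*)\leq\iint y^a|\nabla w^*|^2$ via the variational characterization of the extension is a step the paper leaves implicit.

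The genuine gap is exactly where you flag it: the equality/rigidity case. You correctly identify that a naive Brothers--Ziemer argument is problematic here (weighted energy, degeneracy at $y=0$, slicewise symmetrization), but you leave the resolution as a sketch that doesn't actually close. The paper's resolution is different and simpler than what you propose, and worth knowing: it \emph{avoids} the degenerate thin space entirely by first applying the Steiner inequality only on slabs $\Omega\times[T_1,T_2]$ with $T_1>0$, where the extension $w$ is real analytic in all variables (power series representation of $a$-harmonic functions). On such a slab, Kawohl's result \cite{k85} characterizes the equality case for Steiner symmetrization of a real analytic function cleanly: equality forces $w(\cdot,y)$ to already be Steiner symmetric for $y\in[T_1,T_2]$. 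Then the argument runs by contrapositive: if $u(\cdot,0)$ were not symmetric, continuity would give some $T>0$ at which $w(\cdot,T)$ is not symmetric, hence strict inequality on a slab containing height $T$, hence strict inequality on $\Omega\times\R^+$, contradicting minimality. Your proposed route (show $w^*$ is itself $a$-harmonic and invoke unique continuation) could plausibly be made to work but requires more machinery than the paper's use of analyticity away from $\{y=0\}$, and as written it is not a proof. A smaller point: since $\Omega$ is fixed and already symmetric about $P$, the Steiner symmetrization is anchored to $P$ from the outset, so the translation ambiguity you worry about in the Brothers--Ziemer step does not actually arise here.
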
 
  
  \begin{proof}
   We pick our coordinates so that $P=(0,x_2, \ldots , x_{n})$. 
   We use the $a$-harmonic extension. Let $u$ minimize the energy functional
    \[
     \int_{\Omega \times \R}{y^a|\nabla v|^2}
    \]
   subject to the constraint \eqref{e:constraint}. If we Steiner symmetrize \cite{k85} in the $x_1$ direction (which is orthogonal to $y$)
   to obtain $u^*$ , and if $0<T_1 < T_2 < \infty$, then
    \begin{equation}  \label{e:steiner}
     \iint\limits_{\Omega \times [T_1,T_2]}{y^a|\nabla u^*|^2} \leq \iint\limits_{\Omega \times [T_1,T_2]}{y^a|\nabla u|^2}
    \end{equation}
   and equality is achieved if and only if $u$ is already Steiner symmetric in the $x_1$ direction. The case of
   equality holds from the result in \cite{k85} since $a$-harmonic functions are real analytic away from $y=0$. That is why
   initially we restrict ourselves to $T_1>0$. Now we let $T_1 \to 0$ and $T_2 \to \infty$ to obtain
    \[
     \iint\limits_{\Omega \times \R^+}{y^a|\nabla u^*|^2} \leq \iint\limits_{\Omega \times \R^+}{y^a|\nabla u|^2}
    \]
   If $u$ is not already Steiner symmetric in the $x_1$ direction when $y=0$, then by continuity $u$ will not be  
   Steiner symmetric at some time $T>0$, and so by \eqref{e:steiner} the energy of $u^*$ will be less on an interval
   $\Omega \times [T_1 , T_2]$ and hence also on $\Omega \times \R^+$. Notice that the constraint \eqref{e:constraint}
   is preserved for $u^*$. Then if $u$ is a minimizer of \eqref{e:energy}, it must be symmetric in the $x_1$ direction.
  \end{proof}
  
  This next Corollary gives a sufficient condition on the shape of the domain $\Omega$ under which 
  $\partial \{u>\gamma\} = \partial \{u<\gamma\}$ for solutions of \eqref{e:p} that arise as 
  minimizers. A good question would be what conditions are necessary on $\Omega$ in order to assure the same condition on the free boundary.  
  \begin{corollary}  
   Let $u$ be as in Theorem \ref{t:symmetry}, and assume $\Omega$ is symmetric with respect to $x_i$ for
   $1 \leq i \leq n$. Then 
    \[
     \partial \{u( \ \cdot \ ,0)<\gamma\} = \partial \{u( \ \cdot \ ,0)>\gamma\}.
    \]
  \end{corollary}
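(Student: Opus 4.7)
By Proposition \ref{p:free} we already have $\partial\{u(\cdot, 0) < \gamma\} \subseteq \partial\{u(\cdot, 0) > \gamma\}$, so my plan is to establish the reverse inclusion. Fix $x_0 \in \partial\{u(\cdot, 0) > \gamma\}$; continuity from Theorem \ref{t:solreg} forces $u(x_0, 0) = \gamma$. The first step is to apply Theorem \ref{t:symmetry} to each coordinate direction and note (by inspecting its proof) that $u(\cdot, 0)$ coincides with its Steiner symmetrization in every variable $x_i$. Thus $u(\cdot, 0)$ is symmetric across $\{x_i = 0\}$ and non-increasing in $|x_i|$ for each $i$. After reflecting---which preserves both boundaries in question---I may assume $x_0^i \geq 0$ for every $i$. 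Steiner monotonicity forces $u(\cdot, 0)$ to attain its maximum at the origin, so $x_0 = 0$ would give $u \leq \gamma$ everywhere on the thin space and leave $\{u > \gamma\}$ empty, contradicting $x_0 \in \partial\{u > \gamma\}$; hence $x_0 \neq 0$.

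I then argue by contradiction. Suppose $x_0 \notin \partial\{u(\cdot, 0) < \gamma\}$; then $u(\cdot, 0) \geq \gamma$ on some ball $B_\epsilon(x_0)$ lying in the interior of $\Omega$. For any $y \in B_\epsilon(x_0)$ with $y_i > x_0^i$ for every $i$---a non-empty open subset---changing one coordinate at a time and using the monotonicity in each $|x_i|$ (together with $x_0^i \geq 0$), I obtain $u(y, 0) \leq u(x_0, 0) = \gamma$. Combined with $u \geq \gamma$ on $B_\epsilon(x_0)$, this forces $u \equiv \gamma$ on an open set $V$ of the thin space.

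The final step is to rule out such an open plateau. For $s > 1/2$ this is immediate from Proposition \ref{p:sub}: the proposition gives $\Delta u(\cdot, 0) > 0$ on a thin-space neighborhood of $\{u(\cdot, 0) = \gamma\}$, which contains $V$, while $u \equiv \gamma$ on $V$ makes $\Delta u \equiv 0$ there. For $s \leq 1/2$ I would work through the extension instead: on $V$ the equation gives $\lim_{y \to 0^+} y^a u_y = -\lambda(u - \gamma)_+ = 0$, so even reflection produces an $a$-harmonic extension of $u$ across $V \times \{0\}$ that equals $\gamma$ identically on $V \times \{0\}$; unique continuation for $a$-harmonic functions (via the real analyticity on $\{y \neq 0\}$ guaranteed by the power series representation in \cite{a13}) then propagates $u \equiv \gamma$ throughout the connected set $\Omega \times \R^+$, contradicting $u \to 0$ as $y \to \infty$. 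Either way the contradiction yields $x_0 \in \partial\{u(\cdot, 0) < \gamma\}$.

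The principal obstacle is this last step---excluding open plateaus of $\{u(\cdot, 0) = \gamma\}$. For $s > 1/2$ Proposition \ref{p:sub} is tailor-made, but for $s \leq 1/2$ the $C^2$ regularity on the thin space is unavailable and one must rely on a more delicate unique continuation result for the degenerate operator $\mathrm{div}(|y|^a \nabla \cdot)$; the remainder of the argument is a routine combinatorial consequence of Steiner symmetry in each coordinate together with Proposition \ref{p:free}.
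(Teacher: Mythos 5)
Your proposal is correct and follows essentially the same route as the paper: Proposition \ref{p:free} for one inclusion, Steiner symmetry and coordinatewise monotonicity (extracted from the proof of Theorem \ref{t:symmetry}) to produce an open plateau $\{u(\cdot,0)\equiv\gamma\}$ near any alleged point of $\partial\{u>\gamma\}\setminus\partial\{u<\gamma\}$, and then exclusion of that plateau by an $a$-harmonicity/analyticity argument. The paper's own exclusion step is uniform in $s$: on the plateau $u-\gamma$ has vanishing Dirichlet \emph{and} vanishing weighted Neumann data, so \emph{both} the even and the odd reflections are $a$-harmonic, and the power series representation of \cite{a13} then forces $u\equiv\gamma$. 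Your branch for $s\le 1/2$ is in fact the same observation (even reflection $a$-harmonic plus $u=\gamma$ on $V$), so the case split using Proposition \ref{p:sub} for $s>1/2$ is a valid shortcut but unnecessary. One small caution on the write-up: the phrase ``real analyticity on $\{y\neq 0\}$'' does not by itself propagate vanishing across the hypersurface $\{y=0\}$; what does the work is the Cauchy data (both the Dirichlet value $\gamma$ and the vanishing weighted normal derivative) together with the power series expansion \emph{at} $\{y=0\}$ from \cite{a13}, exactly as in the paper. On the positive side, your handling of coordinates with $x_0^i\ge 0$ is slightly cleaner than the paper's ``we can assume $z_i>0$,'' since reflection cannot make a vanishing coordinate strictly positive.
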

  
  \begin{proof}
   From Theorem \ref{t:symmetry} $u$ is symmetric in each $x_i$ variable. Thus $u$ achieves a maximum at the origin. 
   If $u(0)\leq \gamma$, then $u$ is $a$-harmonic everywhere and this is a contradiction to the maximum principle. Then $u(0)>\gamma$. 
   Suppose there exists $z=(z_1, z_2, \cdots, z_n, 0) \in \partial \{u(\cdot,0)>\gamma\}$ and $z \notin \partial\{u(\cdot,0)<\gamma\}$. 
   By the symmetry of $u$ we can assume $z_i >0$ for each $i$. Also from the symmetry of $u$ we have  
    \[
     \frac{\partial u}{\partial x_i}(x) \leq 0.
    \]
   For each $x$ in the first quadrant and in particular for the point $z$. If $z \notin \partial \{u<\gamma\}$,
   then by continuity there exists an open region 
    \[
     U:= \{x  \mid x \in B_{\epsilon}(z) \cap \{x_i > z_i\} \ \}
    \]
   and $u(x,0)=\gamma$ for every $x \in U$. $u -\gamma$ is then an $a$-harmonic function with both an even and odd extension (in the $y$-variable) 
   across the thin space. Then from the power series representation of $a$-harmonic functions (see \cite{a13}) it follows that $u$ is constant. This is a contradiction
   since $u(x,y)\to 0$ as $y \to \infty$, and $u(0,0)>\gamma>0$. 
  \end{proof}

\section{Regularity of the Free Boundary}  \label{s:freereg}
 In this section we look at the regularity of the free boundary. Our results apply to solutions of \eqref{e:eigen}.
 We now state a Lemma that will allow us to utilize Almgren's frequency function. For solutions of \eqref{e:eigen}
 Almgren's frequency function will not be monotone. However, we will prove that the limit at the origin exists and use
 this result to put a bound on the dimension of the singular set of free boundary points. We define
  \[
   D(r):= \int_{B_r^+}{y^a |\nabla u|^2} \qquad H(r):= \int_{(\partial B_r)^+}{y^a u^2} 
   \qquad N(r)= r \frac{D(r)}{H(r)}
  \]
 
  \begin{lemma}    \label{l:almgren}
   Let $u$ be a solution to \eqref{e:eigen}. Then $\lim_{r \to 0^+} N(r)$ exists and 
    \[
     0 < \lim_{r \to 0} N(r) < \infty
    \]
  \end{lemma}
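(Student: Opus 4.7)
The plan is to prove an almost monotonicity formula: constants $C>0$ and $\epsilon>0$ will satisfy $(\log N(r))' \geq -Cr^{\epsilon-1}$, so that $r \mapsto e^{Cr^\epsilon/\epsilon}\,N(r)$ is nondecreasing and the limit $\lim_{r\to 0^+} N(r) \in [0,\infty]$ exists; the endpoints $0$ and $\infty$ are then ruled out separately. The frequency is centered at an implicit free boundary point which I take to be the origin, so $u(0,0)=0$.

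The computational backbone is standard. For $H$, after the scaling $H(r) = r^{n+a} \int_{(\partial B_1)^+} y^a u(r\,\cdot)^2$, differentiation gives
\[
 H'(r) = \frac{n+a}{r}H(r) + 2\int_{(\partial B_r)^+} y^a u\, u_\nu \, d\H^n,
\]
and the divergence theorem applied to $\div(y^a \nabla u)=0$ in $B_r^+$, together with the thin-space condition $\lim_{y\to 0} y^a u_y = -\lambda u_+$ on $B_r'$, produces $\int_{(\partial B_r)^+} y^a u u_\nu \,d\H^n = D(r) - \lambda \int_{B_r'} u_+^2$. For $D'(r) = \int_{(\partial B_r)^+} y^a |\nabla u|^2$ I would apply a Pohozaev-type identity: multiply $\div(y^a \nabla u) = 0$ by $(x,y)\cdot \nabla u$, integrate over $B_r^+$, and track the contributions from both boundary pieces. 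The result has the shape
\[
 D'(r) = \frac{n-1+a}{r} D(r) + 2\int_{(\partial B_r)^+} y^a u_\nu^2 \, d\H^n + E(r),
\]
with $E(r)$ a correction of the form $-\lambda \int_{\partial B_r'} u_+^2 + (n\lambda/r)\int_{B_r'} u_+^2$ introduced by the inhomogeneous thin-space condition.

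Assembling $N'/N = 1/r + D'/D - H'/H$ and using the Cauchy--Schwarz bound
\[
 \int_{(\partial B_r)^+} y^a u_\nu^2 \, d\H^n \geq \frac{\bigl(D(r) - \lambda \int_{B_r'} u_+^2\bigr)^2}{H(r)},
\]
the Signorini-type main term is nonnegative, and the remaining negative contribution is proportional to $\int_{B_r'} u_+^2/H(r)$ together with $|E(r)|/D(r)$. To see these terms are $O(r^{\epsilon-1})$, I would invoke the interior $C^{0,\alpha}$ regularity of Theorem \ref{t:solreg} at the free boundary point to bound $\int_{B_r'} u_+^2 \leq Cr^{n+2\alpha}$, and compare against a polynomial lower bound for $H(r)$.

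The hard part will be securing that lower bound on $H$, since it requires an a priori upper bound on $N$ -- precisely what the almost monotonicity is meant to yield. I would resolve this by a bootstrap: tentatively assume $N \leq M$ on $(0,r_0)$, integrate the elementary inequality $H'(r)/H(r) \leq (n+a+2M)/r + O(1)$ to get $H(r) \geq c\,r^{n+a+2M}$, and check that for $r_0$ small (depending on $M$) the error estimate proves $N \leq M$ self-consistently. The existence of $\lim N$ and its finiteness then follow. For positivity, if $\lim N = 0$, integrating $H'/H \geq (n+a)/r - Cr^{\epsilon-1}$ forces $\liminf_{r \to 0^+} r^{-(n+a)}H(r) > 0$; but $u(0,0)=0$ and the continuity from Theorem \ref{t:solreg} give $r^{-(n+a)} H(r) \to 0$, a contradiction.
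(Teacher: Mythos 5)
Your strategy (almost monotonicity of $N$ plus a bootstrap to get the a priori bound) is a genuinely different route from the paper's, and the circularity you flag in the bootstrap is real; the paper sidesteps it entirely. Rather than tolerating an error term of size $O(r^{\epsilon-1})$ in $(\log N)'$, the paper introduces a modified frequency $\tilde N(r) = r\,\dfrac{D(r) - \lambda\int_{B_r'}u_+^2}{H(r)}$, which by integration by parts equals $r\,\dfrac{\int_{(\partial B_r)^+} y^a u\,u_\nu}{H(r)}$. After the Rellich identity, the logarithmic derivative of $\tilde N$ collapses to a sign-definite first term plus the Cauchy--Schwarz difference, so $\tilde N$ is \emph{exactly} monotone; no bootstrap is needed, and $\lim_{r\to 0^+}\tilde N(r)$ exists and is finite immediately. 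The comparison back to $N$ is then done with the weighted trace inequality of Corollary \ref{c:trace}, namely $\int_{B_r'}u_+^2 \leq C r^{1-a} D(r)$, giving $(1 - C\lambda r^{1-a})N(r) \leq \tilde N(r) \leq N(r)$, so $N$ and $\tilde N$ share the same limit. This trace inequality is also the estimate you should be using in place of the pointwise H\"older bound $\int_{B_r'}u_+^2 \leq Cr^{n+2\alpha}$: the latter, combined with $H(r)\geq c r^{n+a+2M}$, only closes when $M < \alpha - a/2$, which fails for most admissible $s$ (in particular for $s \leq 1/2$, where one would need $M<1$ even though $N(0+)\geq 1$ at free boundary points). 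The trace inequality compares $\int_{B_r'}u_+^2$ directly to $D(r)$ and removes any dependence on a polynomial lower bound for $H$.

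For positivity of the limit, your argument from $u(0,0)=0$ and the lower bound on $H'/H$ is fine, but the paper's is a compactness/blow-up argument: since $N$ is now known to be bounded, the rescalings $u_r$ of \eqref{e:rescale} are bounded in $H^1(a,B_1^+)$ with $\|u_r\|_{L^2(a,(\partial B_1)^+)}=1$, so along a subsequence $u_{r_k}\to u_0$ strongly in $L^2(a,(\partial B_1)^+)$ (Proposition \ref{p:trace}), whence $u_0\not\equiv 0$ and $N(0+) = \int_{B_1^+}y^a|\nabla u_0|^2 > 0$. Either closing argument works; the substantive difference is in the first half.
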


  \begin{proof}
   For solutions of \eqref{e:eigen}, $N(r)$ will not necessarily be monotone. We therefore begin by considering the modified function
    \[
     \tilde{N}(r) := r \frac{D(r) - \lambda \int_{B_{r}'}{u^2}}{H(r)}.
    \]
   Both $N(r), \tilde{N}(r)$ are absolutely continuous and hence differentiable for almost every $r$. We note that $\tilde{N}(r)>0$. 
   By the same computations as in \cite{cs07} and using that $u$ is a solution to \eqref{e:eigen} (along with the accompanying $C^{1,\alpha}$ regularity)
   we obtain the following Rellich-type identity
    \begin{equation}  \label{e:d}
     D'(r) = \frac{n-1+a}{r} D(r) + 2 \int_{(\partial B_r)^+}{y^a u_{\nu}^2} + 
             \frac{\lambda}{r} \int_{B_{r}'}{\langle x , \nabla u_+^2 \rangle}.
    \end{equation}
   We also have by routine computations 
    \begin{equation}  \label{e:routine}
     \begin{gathered}
       H'(r) = \frac{n+a}{r} H(r) + 2 \int_{(\partial B_r)^+}{y^a u u_{\nu}} \\
             \frac{d}{dr}\left[ \int_{B_{r}'}{u_+^2} \right] = \int_{\partial B_{r}'}{u_+^2} \\
     \int_{B_{r}'}{\langle x , \nabla u_+^2 \rangle} = 
              r \int_{\partial B_{r}'}{u_+^2} -n \int_{B_{r}'}{u_+^2}.
     \end{gathered}
    \end{equation}
   Combining \eqref{e:d} and \eqref{e:routine} we have 
    \[
     \begin{aligned}
      \frac{d}{dr} \log \tilde{N}(r) &= \frac{1}{r} + \frac{n-1+a}{r}\frac{D(r)}{D(r)-\lambda \int_{B_{r}'}u_+^2} - \frac{n+a}{r} \\
                             & \quad + 2\frac{\int_{(\partial B_r)^+}{y^a u_{\nu}^2}}
                                 {D(r)+ \lambda \int_{B_{r}'}{u_+^2}} 
                              - 2\frac{\int_{(\partial B_r)^+}{y^a u u_{\nu}}}
                                 {\int_{(\partial B_{r})^+}{y^a u^2}} \\ 
                             & \quad + \frac{n\lambda}{r} \frac{\int_{B_{r}'}{u_+^2}}
                                 {D(r)+ \lambda \int_{B_{r}'}{u_+^2}} 
     \end{aligned}
    \]
   Using integration by parts and that $u$ is a solution to \eqref{e:eigen} we have
    \[
     D(r)- \lambda \int_{B_{r}'}{u_+^2} = \int_{(\partial B_r)^+}{y^a u u_{\nu}}.
    \] 
   Then 
    \[
     \begin{aligned}
      \frac{d}{dr} \log \tilde{N}(r) &= \frac{1-a}{r}\left[1 - \frac{D(r)}{D(r)+\lambda \int_{B_{r}'}u_+^2} \right] \\
                                  & \qquad + 2 \left(  \frac{\int_{(\partial B_r)^+}{y^a u_{\nu}^2}}
                                 {\int_{(\partial B_r)^+}{y^a u u_{\nu}}} - 
                                  \frac{\int_{(\partial B_r)^+}{y^a u u_{\nu}}}
                                 {\int_{(\partial B_{r})^+}{y^a u^2}}
                                 \right). 
     \end{aligned}                             
    \]
   The first term is clearly nonnegative, and he second term is nonnegative by the Cauchy-Schwarz inequality. 
   Since $\tilde{N}(r)$ is monotone, 
    the limit as $r \to 0$ exists, and
    \begin{equation}  \label{e:zero}
     0 \leq \lim_{r \to 0^+}\tilde{N}(r) < \infty.
    \end{equation}
   Now from Corollary \ref{c:trace} we have 
   	\[
   	 (1-C\lambda r^{1-a})N(r) \leq \tilde{N}(r) \leq N(r),
   	\]
   and so \eqref{e:zero} holds with $\tilde{N}(r)$ replaced by $N(r)$. To show that the limit is greater than zero we 
   note the following rescaling property $N(r,u)=N(1,u_r)$ where 
    \begin{equation} \label{e:rescale}
     u_r := \frac{u(rx,ry)}{\left(\frac{1}{r^{n+a}}\int_{(\partial B_r)^+}{y^a u^2}\right)^{1/2}} .
    \end{equation}
   From the rescaling and since $N(r)$ is bounded we have 
    \[
     \| u_r \|_{L^2(a, (\partial B_1)^+ )} =1 \qquad \| u_r \|_{H^1(a, B_1^+)} \leq M
    \]
   where $M$ is a constant depending on $u$. Then from Proposition \ref{p:trace} we have that there exists a sequence
   $r_k \to 0$ and a function $u_0$ such that  
    \[
     u_{r_k} \rightharpoonup u_0 \text{  in  } H^1(a, B_1^+) \text{  and  }  
     u_{r_k} \to u_0 \text{  in  } L^2(a, (\partial B_r)^+).
    \]
   We note also that 
    \[
     \int_{B_1^+}{y^a | \nabla u_0|^2} = N(0+).
    \]
   Now 
    \[
     \int_{ (\partial B_r)^+}{y^a u_0^2} = 1,
    \]
   so $u_0$ is not identically zero, and hence we conclude that $N(0+)>0$. 
  \end{proof}
  
  \begin{corollary}  \label{c:homogen}
   Let $u$ be a solution to \eqref{e:eigen}. Let $u_r$ be defined as in \eqref{e:rescale}. Then for 
   every sequence $r_k \to 0$, there exists a subsequence and a function $u_0$ so that 
    \[
      u_{r_k} \rightharpoonup u_0  \text{  in  } H^1(a,B_1^+).
    \] 
   Furthermore, $N(u,0+)=N(u_0,r)$ for every $0<r<1$ and if we evenly reflect $u_0$ in $y$, then
   $u_0$ is $a$-harmonic and homogeneous of degree $N(u,0+)$.  
  \end{corollary}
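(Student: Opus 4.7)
The plan is to combine three standard ingredients: weak compactness of the rescalings coming from the frequency bound, the fact that the Neumann-type right-hand side in the rescaled equation carries an extra positive power of $r$ and so vanishes in the blow-up limit, and the classical Almgren characterization of functions with constant frequency as homogeneous $a$-harmonic functions.

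First I would note that by the scaling identity $N(u_r,1)=N(u,r)$ and $\|u_r\|_{L^2(a,(\partial B_1)^+)}=1$, Lemma \ref{l:almgren} gives a uniform bound $\|u_r\|_{H^1(a,B_1^+)}\le M$ for all small $r$. Weak compactness in $H^1(a,B_1^+)$ together with the compactness of the trace operators from Proposition \ref{p:trace} then produces a subsequence $u_{r_k}\rightharpoonup u_0$ in $H^1(a,B_1^+)$, with $u_{r_k}\to u_0$ strongly in $L^2(a,(\partial B_1)^+)$ and in $L^2(B_1')$. In particular $\|u_0\|_{L^2(a,(\partial B_1)^+)}=1$, so $u_0\not\equiv 0$.

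Next I would identify the limit equation. A direct change of variables in \eqref{e:eigen} shows that the rescaled function satisfies $\div(y^a\nabla u_r)=0$ on $B_1^+$ together with
\[
 \lim_{y\to 0}y^{-a}(u_r)_y(x,y)=-\lambda r^{2-2s}(u_r(\cdot,0))_+\quad\text{on }B_1'.
\]
Since $2-2s>0$ the right-hand side tends to $0$, and passing to the limit in the weak formulation (using the strong $L^2(B_1')$ convergence of the traces) gives $\lim_{y\to 0}y^{-a}(u_0)_y=0$ on $B_1'$. Therefore, after even reflection across the thin space, $u_0$ is $a$-harmonic on $B_1$.

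For the identity $N(u,0^+)=N(u_0,\rho)$ I would fix $\rho\in(0,1)$ and use the scaling $N(u_{r_k},\rho)=N(u,r_k\rho)\to N(u,0^+)$. Combining the Rellich-type identity \eqref{e:d} for $u_{r_k}$ with the vanishing of the $\lambda r^{2-2s}$ term, one upgrades the weak convergence to strong $H^1(a,\cdot)$ convergence on every $B_\rho^+\Subset B_1^+$, so that both $D(u_{r_k},\rho)\to D(u_0,\rho)$ and $H(u_{r_k},\rho)\to H(u_0,\rho)$; this yields $N(u_0,\rho)=N(u,0^+)$ for every $\rho\in(0,1)$. Since the even reflection of $u_0$ is $a$-harmonic with constant frequency, the classical Almgren monotonicity computation applied to $\tfrac{d}{dr}\log N(u_0,r)=0$ forces the equality case of the Cauchy-Schwarz inequality on the spheres $(\partial B_r)^+$, whence $u_0$ is homogeneous of degree $\kappa=N(u,0^+)$. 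The main obstacle is precisely the upgrade from weak to strong $H^1(a,\cdot)$ convergence: without it one would only get $N(u_0,\rho)\le N(u,0^+)$ by lower semicontinuity of the Dirichlet energy, and the equality needed to invoke the homogeneity characterization would be out of reach.
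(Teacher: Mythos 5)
Your plan is essentially the same as the paper's, and in fact more careful than the paper's one-paragraph sketch: the paper likewise establishes weak compactness from the frequency bound and Proposition~\ref{p:trace}, passes to the limit in the rescaled weak formulation to conclude $a$-harmonicity of the even reflection, and asserts $N(t,u_0)=\lim_{r\to 0}N(t,u_r)=N(u,0^+)$ by scaling, but the paper neither justifies the interchange of limit and frequency nor proves the homogeneity claim at all. You are right to flag the weak-to-strong $H^1$ upgrade as the real content: without it $D(u_0,\rho)\le\liminf D(u_{r_k},\rho)$ only gives $N(u_0,\rho)\le N(u,0^+)$, and the homogeneity argument (equality in Cauchy--Schwarz from constant frequency) would then be unavailable. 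Your identification of that gap and your explicit inclusion of the Almgren-style homogeneity argument make the proposal a genuine improvement over the printed proof.

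One small imprecision: the Rellich identity \eqref{e:d} by itself does not yield strong $H^1(a,\cdot)$ convergence. The standard mechanism is a Caccioppoli-type energy comparison, testing $\operatorname{div}(y^a\nabla(u_{r_k}-u_0))=0$ (plus the vanishing Neumann data) against $(u_{r_k}-u_0)\phi^2$ for a cutoff $\phi$ supported in $B_1$, and using the strong $L^2$ convergence of traces and of $u_{r_k}$ in the thin space to kill the lower-order terms; alternatively one can invoke interior $C^{1,\alpha}_{\mathrm{loc}}$ estimates for $a$-harmonic functions with converging boundary data. Either route gives $\nabla u_{r_k}\to\nabla u_0$ strongly in $L^2(a,B_\rho^+)$ for every $\rho<1$, which is exactly what you need. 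On the scaling exponent: with the Neumann condition as written in \eqref{e:eigen} (with $y^{-a}$), the rescaled right-hand side carries $r^{1+a}=r^{2-2s}$ as you state; with the Caffarelli--Silvestre convention $y^{a}$ the exponent is $r^{1-a}=r^{2s}$. The paper writes a bare factor of $r$, which matches neither; in any case the power is strictly positive, so the blow-up limit equation is unaffected.
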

 
  \begin{proof}
   The weak convergence of a subsequence was established in the proof of Lemma \ref{l:almgren}. Now for
   $t>0$
    \[
     N(t, u_0)= \lim_{r \to 0} N(t,u_r) = \lim_{r \to 0} N(tr,u) = N(u,0+),
    \]
   so $N(r,u_0)\equiv N(0+)$. From the rescaling we have the property 
    \[
     \lim_{y \to 0} y^a \partial_y u_r(x,y) = -r\lambda (u_r)_+. 
    \] 
   If we let $\psi \in C_0^2(B_1)$, then
    \[
     \lim_{ r_{k} \to 0} \int_{B_1^+}{y^a \langle \nabla \psi, \nabla u_{r_k} \rangle} =
      r_k \lambda\int_{B_{1}'}{(u_{r_k})_+ \psi} \to 0.
    \]
   The right hand side goes to zero by utilizing Proposition \ref{p:trace}. Then if we extend $u$ evenly in 
   the $y$ variable, $u$ is $a$-harmonic in $B_1$. 
  \end{proof} 
  
  The blow-up solutions will satisfy \eqref{e:eigen} with $\lambda=0$. We can therefore evenly reflect across the thin space $\{y=0\}$, and each
  blow-up solution will be even and homogeneous.  From \cite{CSS} it follows that the blow-up solutions will be homogeneous of order $k \in \N$. 
  When we have the additional assumption $u_y \leq 0$ - as is the case when $u$ is a solution to \eqref{e:p} - then we have a better classification. 
 
  \begin{corollary}  \label{c:classify}
   Let $u$ be a solution to \eqref{e:eigen} and let $x_0 \in \Gamma$. Assume also $u_y \leq 0$. Let $u_r \to v$ be a blow-up of $u$ at $x_0$. 
   If $\nabla_x u(x_0) \neq 0$, then $v$ is a linear function. If $\nabla_x u(x_0)=0$ we have the following alternative, either $(I)$:
    $v$ is either homogeneous of degree 2
   and of the form 
    \begin{equation}  \label{e:form}
     v(x,y) = p(x) - cy^2
    \end{equation}
   where $p(x)$ is homogeneous of degree 2 in the $x$-variable and $c\geq0$, or $(II)$:
    $v_y \equiv 0$ and $\Delta u(\cdot,0)=0$ and $u$
   is homogeneous of degree $k \in \N$ with $k \geq 2$. 
  \end{corollary}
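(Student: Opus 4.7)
My plan is to handle the three alternatives in the corollary in turn. By Corollary \ref{c:homogen}, after even reflection across $\{y=0\}$ the blow-up $v$ is $a$-harmonic on $B_1$ and homogeneous of degree $k := N(u,0+)$, and by the classification in \cite{CSS} one has $k \in \N$. The inequality $u_y \leq 0$ is preserved by the rescaling \eqref{e:rescale}, so $v_y \leq 0$ for $y > 0$. Since $v$ is $a$-harmonic, even in $y$, and homogeneous of integer degree, it is a polynomial; I would systematically use the Taylor expansion $v(x,y) = \sum_{j \geq 0} q_j(x) y^{2j}$, where $q_j$ is a homogeneous polynomial of degree $k - 2j$ and, from matching powers of $y$ in $\Delta v + (a/y) v_y = 0$, satisfies the recurrence $q_{j+1} = -\Delta_x q_j / [2(j+1)(2j+1+a)]$.

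For $\nabla_x u(x_0) \neq 0$, the $C^{1,\alpha}$ interior regularity of Theorem \ref{t:solreg} makes the first-order Taylor expansion of $u$ at $x_0$ dominate the rescaling limit, forcing $k = 1$. A degree-one $a$-harmonic polynomial that is even in $y$ has no $y$-component, so $v$ is linear in $x$ (proportional to $\nabla_x u(x_0) \cdot x$).

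For $\nabla_x u(x_0) = 0$ we have $k \geq 2$. If $k = 2$ then the expansion reduces to $v = p(x) + q_1 y^2$ with $p$ a quadratic polynomial and $q_1 = -\Delta_x p /[2(1+a)]$ a constant; the condition $v_y = 2 q_1 y \leq 0$ for $y > 0$ forces $q_1 \leq 0$, and setting $c = -q_1 \geq 0$ yields $v = p(x) - c y^2$, which is case $(I)$. If $k \geq 3$ I would show $v_y \equiv 0$. Put $W := y^a v_y$; the computation recalled in Proposition \ref{p:sub} (following \cite{cs07}) shows $W$ is $(-a)$-harmonic on $B_1^+$, and the asymptotic $v_y = 2 q_1(x) y + O(y^3)$ together with $v_y \leq 0$ gives $W \leq 0$ in $B_1^+$ and $W \to 0$ on the thin space. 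The strong maximum principle then yields a dichotomy: either $W \equiv 0$, in which case $v_y \equiv 0$ and $a$-harmonicity reduces to $\Delta_x v = 0$, giving case $(II)$; or $-W > 0$ strictly in the interior, so that applying Lemma \ref{l:hopf} (with $a$ replaced by $-a$) to $-W$ at every $(x_0,0) \in B_1'$ produces $q_1(x_0) < 0$. By continuity and homogeneity, $q_1 < 0$ on $\R^n \setminus \{0\}$, and when $k - 2$ is odd this is immediately incompatible with $q_1(-x) = -q_1(x)$, so $v_y \equiv 0$ and case $(II)$ follows.

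The main obstacle will be the even case $k \geq 4$, where parity alone does not rule out a strictly negative $q_1$. Here I plan to decompose the trace via spherical harmonics, writing $q_0 = \sum_j |x|^{2j} h_{k-2j}$ with $h_\ell$ a harmonic polynomial of degree $\ell$, and to exploit that the $a$-harmonic extension respects this decomposition: the extension of $|x|^{2j} h_{k-2j}$ has the product form $h_{k-2j}(x) R_j(|x|^2, y^2)$ for an explicit polynomial $R_j$ determined by the recurrence. Averaging $v_y/(2y) \leq 0$ over the unit sphere in $x$ annihilates every harmonic of positive degree and isolates the purely radial piece $h_0 |x|^k$, which appears only for even $k$; a direct computation of the extension of $c|x|^k$ shows that its $\partial_y/y$ changes sign on $(|x|,y) \in (0,\infty)^2$ whenever $k \geq 4$ and $c \neq 0$, forcing $h_0 = 0$. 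Once the radial piece is removed, the spherical average of $v_y/(2y)$ vanishes identically and the pointwise bound $\leq 0$ forces $v_y/(2y) \equiv 0$ on every sphere. By linear independence of distinct-degree spherical harmonics each summand must vanish separately, and the Leibniz calculation $\Delta_x(h_\ell |x|^{2j}) = 2j(2\ell + 2j + n - 2) h_\ell |x|^{2j-2}$ shows this is incompatible with $h_{k-2j} \not\equiv 0$ for any $j \geq 1$. Hence $q_0 = h_k$ is harmonic and $v = h_k(x)$ is independent of $y$, establishing case $(II)$.
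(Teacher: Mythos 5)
Your opening (even reflection, $a$-harmonicity, integer homogeneity $k$ via \cite{CSS}, the Taylor expansion $v = \sum_j q_j(x)y^{2j}$ with the recurrence $q_{j+1} = -\Delta_x q_j/[2(j+1)(2j+1+a)]$, and the treatment of the cases $k=1$ and $k=2$) is sound and in the spirit of the paper. Where the two arguments diverge is how they rule out $k \geq 3$. The paper does this in one stroke by applying the Boundary Harnack Principle to the two nonnegative $(-a)$-harmonic functions $-y^a v_y$ and $y^{1+a}$, both vanishing on the thin space: they are comparable, hence $-y^a v_y$ (which is homogeneous of degree $k-1+a$) must be homogeneous of degree $1+a$, so $k=2$ unless $y^a v_y \equiv 0$. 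Your route replaces this with a pointwise Hopf lemma, a parity argument for $k-2$ odd, and a spherical-harmonics decomposition for even $k \geq 4$.

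The parity step is correct, but the even case $k \geq 4$ is where the proposal has a real gap. Your claim that "a direct computation of the extension of $c|x|^k$ shows that its $\partial_y/y$ changes sign on $(0,\infty)^2$ whenever $k \geq 4$" is not established and is not obvious: after averaging over spheres, $v_y/(2y)$ restricted to the radial piece is a polynomial $\sum_{j=1}^{k/2} j c_j |x|^{k-2j} y^{2j-2}$ whose coefficients alternate sign, and ruling out a one-signed such polynomial is a nontrivial discriminant-type estimate that you would need to carry out for every even $k$. The argument is plausible (it works for $k=4$ and $k=6$) but it is a lot of unverified computation, and it is the crux of your case $(II)$.

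The larger issue is that you did not need any of this. Your own Hopf step already finishes the proof if you apply it at the origin. Lemma \ref{l:hopf} (with $a$ replaced by $-a$) applied to the nonnegative, nonconstant $(-a)$-harmonic function $-W = -y^a v_y$ at $(0,0)$ gives
\[
\liminf_{y\to 0^+}\frac{-W(0,y)}{y^{1+a}} > 0.
\]
But from your expansion, $v_y(0,y)$ is either identically zero ($k$ odd) or equal to $k\,q_{k/2}\,y^{k-1}$ with $q_{k/2}$ constant ($k$ even), so $-W(0,y)/y^{1+a} = O(y^{k-2}) \to 0$ whenever $k \geq 3$. This contradicts the Hopf bound unless $W \equiv 0$, which is exactly case $(II)$. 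Thus $k \geq 3$ forces $v_y \equiv 0$ with no parity split and no spherical-harmonics computation. Put differently, the statement "$q_1(x_0) < 0$ at every $(x_0,0) \in B_1'$" that you extract from Hopf already fails at $x_0 = 0$ (since $q_1$ is a homogeneous polynomial of positive degree when $k\geq 3$, so $q_1(0)=0$), and noticing that closes the argument immediately. I'd recommend replacing the last paragraph of your proposal with this one-line observation, or with the paper's BHP comparison argument, which accomplishes the same thing by directly comparing $-y^a v_y$ with $y^{1+a}$.
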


  \begin{proof}
   When $\nabla_x u(x_0)\neq 0$ this is immediately clear since $v= \nabla_x u(x_0)$. Now since $y^a u_y(x,y) \leq 0$ for all $(x,y)$, 
   this inequality is preserved in the limit for $v$. Then $y^a v_y$ is $-a$-harmonic, nonpositive for $y>0$, and has zero dirichlet data when $y=0$ 
   (since $v$ is even). From the Boundary Harnack Principle \cite{CSS} it follows that $y^a v_y$ 
   is comparable to $y^{1+a}$, or identically zero. If $v_y \equiv 0$ then the classification is immediate. If $v_y$ is not equivalently zero, then
   since $v$ is homogeneous
   $y^a v_y$ is also homogeneous, and thus it is homogeneous of degree $1+a$. Then $v$ is homogeneous of degree 2. Since $v_y\leq 0$ for $y>0$ it follows that
   $v$ must be of the form \eqref{e:form}. (See also \cite{a13} for a classification of homogeneous $a$-harmonic functions). 
  \end{proof}

 We now define the regular set of the free boundary. Let $u$ be a solution to \eqref{e:eigen}.  
  \[
   R_u := \{x \mid u(x,0)=0 \text{  and  } \nabla_x u(x,0)\neq 0 \}.
  \]
 From Corollary \ref{c:homogen} $R_u$ is equivalently the set of free boundary points where \\
 $N(u,x,0+)=1$ or equivalently the set of 
 free boundary points that has a blow-up (and hence every blow-up) that is a linear function independent of the variabley $y$. 
 
 From the implicit function theorem and Theorem \ref{t:solreg} we have the following regularity result for $R(u)$.
 \begin{theorem} \label{t:implicit}
  Let $u$ be a solution to \eqref{e:eigen}. If $u(x_0,0)=0$ and $(x_0,0) \in R_u$, then in a neighborhood 
  $V$ of $x_0$, $\{u(x,0)=0\}$ is a $C^{1,\alpha} \ (C^{2,\alpha})$ graph for $s \leq 1/2 \ (>1/2)$ and $\alpha$
  as in  Theorem \ref{t:solreg}.
 \end{theorem}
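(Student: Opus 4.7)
The statement is essentially a corollary of Theorem \ref{t:solreg} combined with the classical implicit function theorem, so my plan is to reduce it to those two ingredients.

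First, I would localize around $x_0$ by choosing a compact neighborhood $K \Subset \Omega$ containing $x_0$. Applying Theorem \ref{t:solreg} on $K$ gives that the trace $x \mapsto u(x,0)$ belongs to $C^{1,\alpha}(K \cap \R^n)$ for every $\alpha < 2s$ when $s \leq 1/2$, and to $C^{2,\alpha}(K \cap \R^n)$ for every $\alpha < 2s - 1$ when $s > 1/2$. This is exactly the regularity we need for the free boundary, which lives on the thin space $\R^n \times \{0\}$.

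Next, since $(x_0,0) \in R_u$ by hypothesis, the definition of the regular set gives $\nabla_x u(x_0,0) \neq 0$, so some partial $\partial_{x_i} u(x_0,0)$ is nonzero. The $C^{1,\alpha}$ version of the implicit function theorem (respectively $C^{2,\alpha}$), which preserves Holder regularity of the derivatives, then yields a neighborhood $V$ of $x_0$ and a function $\varphi$ of the remaining $n-1$ variables, in the same Holder class, such that
\[
 \{x \in V : u(x,0) = 0\} = \{x \in V : x_i = \varphi(x_1, \ldots, \hat{x}_i, \ldots, x_n)\}.
\]

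There is no real obstacle beyond citing the right version of the implicit function theorem. The only conceptual point worth emphasizing is that Theorem \ref{t:solreg} delivers regularity for $u$ on the thin space $\R^n$ (not just off it), which is precisely where the free boundary $\Gamma$ sits, so no additional trace or extension argument is required.
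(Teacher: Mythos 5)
Your proposal is correct and follows exactly the route the paper intends: the paper gives no formal proof of this theorem, only the preceding remark that it follows from Theorem \ref{t:solreg} together with the implicit function theorem, and your argument is precisely the fleshing-out of that remark. The localization to a compact $K \Subset \Omega$, the observation that Theorem \ref{t:solreg} gives the required Hölder regularity on the thin space, and the application of the $C^{k,\alpha}$-preserving implicit function theorem at a point where $\nabla_x u \neq 0$ are all correct and complete.
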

 
 It is common in free boundary problems for the free boundary to be more regular than the solution. For instance around regular points 
 for the original local plasma problem \eqref{e:ploc} the free boundary is real analytic \cite{ks78}. A question of interest would be to
 prove higher regularity of the free boundary for solutions of \eqref{e:eigen}.  

  
 


\section{The Singular Set}  \label{s:sing}
  In this section we consider solutions to \eqref{e:eigen} with the additional assumption $u_y \leq 0$. This is the case for instance
  when $u$ is a solution to \eqref{e:p}. The main result of this section is to give a Hausdorff dimensional bound for the singular set 
  defined as 
   \[
    S_u := \{(x,0)\in \partial\{u(\cdot,0)<0\} \mid \nabla_x u(x,0)=0\}.
   \]
  Notice that we do not consider the set $(x,0) \in(\partial\{u(\cdot,0)>0\} \setminus \partial\{u(\cdot,0)<0\})$. 
  The main result of this section is 
   \begin{theorem}  \label{t:singular}
    Let $u$ be a solution to \eqref{e:eigen} in $\Omega$. Then the Hausdorff dimension 
     \[
      \H^\tau(S_u) = 0 
     \]
    for $\tau>n-2$.
   \end{theorem}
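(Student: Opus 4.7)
The plan is to apply a Federer-type dimension reduction argument, using the blow-up classification in Corollary \ref{c:classify} together with the hypothesis $u_y \leq 0$. That classification asserts that at any $x_0 \in S_u$, every blow-up limit $v$ is either \emph{type (I)}, $v(x,y) = p(x) - cy^2$ with $p$ homogeneous quadratic in $x$ and $c \geq 0$, or \emph{type (II)}, a $y$-independent homogeneous harmonic polynomial in $x$ of degree $k \geq 2$. In particular the Almgren frequency $N(u, x_0, 0+) \geq 2$ at every point of $S_u$.

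The first preliminary step is to establish upper semi-continuity of $x \mapsto N(u,x,0+)$, which follows from the compactness of blow-up sequences from Corollary \ref{c:homogen} together with the monotonicity of $\tilde N(r)$ obtained in Lemma \ref{l:almgren}. This ensures that the singular set is relatively closed in the free boundary and is suitable for a covering argument.

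The core geometric input is the \emph{spine bound}: for every $x_0 \in S_u$ and every blow-up $v$ at $x_0$, the thin-space spine
\[
 L(v) := \{\xi \in \R^n : v(x+\xi, y) = v(x, y) \text{ for all } (x,y)\}
\]
satisfies $\dim L(v) \leq n-2$. In case (II), a nonzero homogeneous harmonic polynomial in $\R^n$ of degree $\geq 2$ cannot depend on only one coordinate (a univariate polynomial of degree $\geq 2$ is never harmonic), so its spine has codimension at least $2$. In case (I), writing $p(x) = \tfrac{1}{2}\langle Ax, x\rangle$, the $a$-harmonicity of $v$ yields the trace identity $\operatorname{tr} A = 2c(1+a) \geq 0$, while the condition $x_0 \in \partial\{u(\cdot,0) < 0\}$, combined with $u_y \leq 0$ and uniform convergence of the rescalings $u_{r_k}$ to $v$ on compact subsets of the thin space (afforded by Theorem \ref{t:solreg}), forces $A$ to possess at least one strictly negative eigenvalue. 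Together with $\operatorname{tr} A \geq 0$, this gives $\operatorname{rank} A \geq 2$ and hence $\dim \ker A \leq n-2$.

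Granted the spine bound, the standard Federer reduction produces the theorem. I would stratify $S_u = \bigcup_{d=0}^{n-2} S^d$, where $S^d$ is the set of singular points admitting a blow-up with $\dim L(v) = d$; for each $d$, the homogeneity of blow-ups and upper semi-continuity of $N$ combine with the usual blow-up-translation covering argument (translation invariance of $v$ along $L(v)$ propagates singularity in these $d$ directions) to give $\H^\tau(S^d) = 0$ for all $\tau > d$. Summing over $d$ yields $\H^\tau(S_u) = 0$ for $\tau > n-2$. The main obstacle I anticipate is the type (I) spine bound in the borderline configuration where $A$ would be positive semi-definite of rank $1$: this case is not immediately excluded by the leading-order blow-up alone, since higher-order Taylor corrections could a priori place $x_0$ in $\partial\{u(\cdot,0)<0\}$. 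Ruling this out requires either a second-order blow-up along $\ker A$ or a direct covering argument exploiting $u_y \leq 0$ to show that the configuration's singular locus already has dimension $\leq n-2$.
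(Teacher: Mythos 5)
Your overall strategy matches the paper's: both are Federer dimension-reduction arguments built on the blow-up classification of Corollary~\ref{c:classify} and the sign condition $u_y\leq 0$. The difference is organizational. The paper splits the reduction into Lemma~\ref{l:fed1} (a compactness/covering step reducing the estimate to blow-up limits) and Lemma~\ref{l:fed2} (an induction on dimension starting from $n=1$, where it is shown directly that type~(I) blow-ups have $u_0(\cdot,0)=ax^2\geq 0$ and type~(II) is vacuous, so the thin singular set of any one-dimensional blow-up is empty). You instead stratify $S_u$ by the dimension of the invariant spine $L(v)$ of a blow-up. These are two standard ways to run Federer's argument and buy essentially the same thing; what you call the ``spine bound'' $\dim L(v)\leq n-2$ is exactly what the paper's $n=1$ vacuousness plus translation-invariance propagation encodes.

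The gap you flag --- that a type~(I) blow-up could a priori have $A$ positive semidefinite of rank $1$, giving $\dim\ker A=n-1$ --- is real and is in fact present, unaddressed, in the paper as well. In the paper's $n=1$ step the analogous claim is the sentence ``Since $x_0\in\partial\{u(\cdot,0)<0\}$, it follows that $0\in\partial\{u_0(\cdot,0)<0\}$.'' This is asserted, not proved, and it is precisely what fails if $A$ is positive semidefinite: the normalized rescalings $u_{r_k}$ take negative values at points $z_k$, but those points accumulate on $\{v(\cdot,0)=0\}=\ker A$, and uniform convergence only yields $v(z_0,0)\leq 0$, not strict negativity. So your instinct that higher-order corrections could in principle place $x_0$ in $\partial\{u(\cdot,0)<0\}$ while the quadratic leading part is nonnegative is exactly the subtlety the paper glosses over. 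One saving observation worth recording: if $A\geq 0$, then $\{v(\cdot,0)<0\}=\emptyset$ and therefore $S_v=\emptyset$, so in the paper's two-lemma organization (which only needs $\H^\tau(S_v)=0$ for blow-ups $v$, not a spine bound for $v$) this degenerate blow-up causes no harm at the level of Lemma~\ref{l:fed2}. The remaining work in either organization is to show that such a $v$ cannot arise as a blow-up at a point of $S_u$ at all, or else to replace the strict spine bound by a covering argument that uses $S_v=\emptyset$ directly; the paper does neither explicitly, and your closing paragraph correctly identifies this as the step requiring further justification. One minor omission in your sketch: you do not spell out the compactness step corresponding to Lemma~\ref{l:fed1} (the ``Property (P)'' covering argument with $C^{1,\beta}$ convergence of rescalings), though it is routine given Corollary~\ref{c:homogen} and Theorem~\ref{t:solreg}.
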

  To prove Theorem \ref{t:singular} we follow Federer's method of dimension reduction utilized in \cite{ds15}. The Theorem is an immediate
  consequence of the following two Lemmas. 
  
   \begin{lemma}  \label{l:fed1}
    Assume that $\H^\tau(S_v)=0$ for all functions $v$ which arise as blow-ups of solutions to \eqref{e:eigen}. Then $\H^\tau(S_u)=0$
    for all solutions $u$ to \eqref{e:eigen} in $\Omega$. 
   \end{lemma}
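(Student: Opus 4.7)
The plan is to carry out a standard Federer-type dimension reduction via a density-and-blow-up contradiction, using Corollary \ref{c:homogen} as the engine for producing blow-ups and the interior regularity Theorem \ref{t:solreg} as the tool that propagates the singular set across the blow-up limit. I would argue by contradiction: suppose $\H^\tau(S_u)>0$ for some solution $u$ of \eqref{e:eigen} in $\Omega$. Then by the standard upper-density lemma for Hausdorff measures (see, e.g., Mattila), there exists a point $(x_0,0)\in S_u$ and a constant $c_0>0$ such that
\[
\limsup_{r\to 0^+}\frac{\H^\tau_\infty\bigl(S_u\cap B_r(x_0)\bigr)}{r^\tau}\geq c_0.
\]
Pick a sequence $r_k\to 0$ realizing this $\limsup$ and form the rescalings $u_{r_k}$ centered at $x_0$ as in \eqref{e:rescale}. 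By Corollary \ref{c:homogen} we may pass to a subsequence so that $u_{r_k}\rightharpoonup u_0$ in $H^1(a,B_1^+)$, where $u_0$ (after even reflection) is an $a$-harmonic, homogeneous blow-up, i.e.\ a function in the class for which the hypothesis $\H^\tau(S_{u_0})=0$ is available.

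The heart of the matter is to transfer the positive $\tau$-density of $S_u$ at $x_0$ to a positive $\tau$-density of $S_{u_0}$ at the origin, so that the hypothesis is violated. For this I would argue that the rescaled singular sets
\[
\Sigma_k:=\tfrac{1}{r_k}\bigl(S_u-x_0\bigr)\cap B_1
\]
accumulate, in the Hausdorff sense, inside $S_{u_0}\cap\overline{B_1}$. Concretely, if $(z_k,0)\in\Sigma_k$ with $z_k\to z$, then the Almgren-frequency-based rescaling together with the interior $C^{1,\alpha}$ regularity (Theorem \ref{t:solreg}) upgrades the weak $H^1(a)$-convergence $u_{r_k}\to u_0$ to locally uniform $C^1$ convergence on the thin space, so that $u_0(z,0)=0$ and $\nabla_x u_0(z,0)=0$; and the condition $(z_k,0)\in\partial\{u(\cdot,0)<0\}$ forces $(z,0)\in\partial\{u_0(\cdot,0)<0\}$ (using that $u_0$ is nontrivial and Proposition \ref{p:free} together with the homogeneity from Corollary \ref{c:homogen} preclude $u_0$ from being nonnegative in a neighborhood of $z$). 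Hence $(z,0)\in S_{u_0}$.

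Once this containment-in-the-limit is established, upper semicontinuity of the Hausdorff content $\H^\tau_\infty$ under Hausdorff convergence yields
\[
\H^\tau_\infty\bigl(S_{u_0}\cap\overline{B_1}\bigr)\geq\limsup_{k\to\infty}\H^\tau_\infty(\Sigma_k)=\limsup_{k\to\infty}\frac{\H^\tau_\infty(S_u\cap B_{r_k}(x_0))}{r_k^\tau}\geq c_0>0,
\]
which contradicts the standing hypothesis $\H^\tau(S_{u_0})=0$, and therefore $\H^\tau(S_u)=0$.

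The main obstacle I expect is justifying the Hausdorff accumulation $\Sigma_k\to S_{u_0}$, specifically that weak convergence from Corollary \ref{c:homogen} is strong enough to guarantee the limit point lies in $\partial\{u_0(\cdot,0)<0\}$ (rather than merely in the zero set of $u_0$). This will require combining the uniform interior regularity of Theorem \ref{t:solreg} applied to $u_{r_k}$ (so that $\nabla_x u_{r_k}\to\nabla_x u_0$ locally uniformly on the thin space) with the nondegeneracy implicit in $N(u,x_0,0+)>0$ from Lemma \ref{l:almgren}, which prevents the blow-up $u_0$ from vanishing identically on one side. Everything else is a bookkeeping of Federer's classical argument.
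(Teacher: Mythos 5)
Your proposal is a genuine Federer dimension reduction, but via the \emph{density-point} route (upper density lemma plus upper semicontinuity of $\H^\tau_\infty$ under Hausdorff convergence of compacta), whereas the paper runs the \emph{covering} route: it first establishes a compactness Property $(P)$ --- that for each $x\in S_u$ and all small $\delta$, any subset of $S_u\cap B_\delta(x)$ can be covered by finitely many balls centered in it with $\sum r_i^\tau\le\delta^\tau/2$ --- and then iterates this covering to drive the premeasure of the sets $D_k=\{x\in S_u:d_x\ge 1/k\}$ to zero. Both are standard implementations of Federer's argument and either one, correctly carried out, proves the lemma; the paper's covering version is slightly more self-contained because it does not need the density lemma or outer regularity of $\H^\tau_\infty$ as external inputs, while your density version avoids the nested covering bookkeeping.

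Both proofs, however, funnel through the same delicate step, and there I think your justification is not adequate. You need that if $(z_k,0)\in\Sigma_k$ with $z_k\to z$, then $(z,0)\in S_{u_0}$. The $C^1$ convergence of $u_{r_k}$ on the thin space (Theorem~\ref{t:solreg}) does give $u_0(z,0)=0$ and $\nabla_x u_0(z,0)=0$, but it does \emph{not} by itself give $(z,0)\in\partial\{u_0(\cdot,0)<0\}$: the negative set $\{u_{r_k}(\cdot,0)<0\}$ near $z_k$ can shrink faster than the blow-up scale and vanish in the limit. Your claimed fix --- that Proposition~\ref{p:free} and the homogeneity from Corollary~\ref{c:homogen} ``preclude $u_0$ from being nonnegative in a neighborhood of $z$'' --- does not work as stated. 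Proposition~\ref{p:free} gives $\partial\{u_0<0\}\subset\partial\{u_0>0\}$, which says nothing if $\{u_0(\cdot,0)<0\}=\emptyset$, and $u_0(x,y)=|x|^2-\frac{n}{1+a}y^2$ is an even, homogeneous, $a$-harmonic profile with $(u_0)_y\le 0$ for $y>0$ that is nonnegative on the whole thin space. So nothing you have cited rules out such a limit; some further nondegeneracy of the negative part of $u$ near a point of $\partial\{u(\cdot,0)<0\}$, preserved under blow-up, must be supplied. I note the paper's own proof is equally terse here: it asserts $S_{u_{\delta_k}}\cap B_1\subset\bigcup B_{r_i/4}$ from $C^{1,\beta}$ convergence alone, and in the proof of Lemma~\ref{l:fed2} asserts without argument that $x_0\in\partial\{u(\cdot,0)<0\}$ forces $0\in\partial\{u_0(\cdot,0)<0\}$. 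You correctly flagged this as the main obstacle; I would say your sketch of how to close it is the one place where the proposal is not yet a proof.
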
  
   
   \begin{proof}
    We first show the following Property $(P)$: for every $x \in S_u$ there exists $d_x >0$ such that for all $\delta \leq d_x$ any subset 
    $D$ of $S_u \cap B_{\delta}(x)$ can be covered by a finite number of balls $B_{r_i}(x_i)$ with $x_i \in D$ such that
     \[
      \sum_{i} r_i^\tau \leq \frac{\delta^\tau}{2}.
     \]
    We show $(P)$ using a compactness argument. Suppose by way of contradiction the conclusion is not true for a sequence $\delta_k \to 0$. By picking a
    subsequence if necessary we perform a blow-up at $x$, $u_{\delta_k} \to u_0$. By assumption there exists finitely many $B_{r_i/4}(x_i)$ such that
     \[
      S_{u_0} \subset \cup B_{r_i/4} \quad \text{  and  } \quad \sum_{i} r_i^\tau \leq \frac{1}{2}. 
     \]
    Since $u_{\delta_k} \to u_0 $ in $C^{1,\beta}$ it follows that there exists $k_0$ such that if $k>k_0$ then
     \[
      S_{u_{\delta_k}} \cap B_1 \subset \cup B_{r_i/4}.
     \]
    Then rescaling backward $u$ satisfies the conclusion for all large $k$ in $B_{\delta_k}$ and we reach a contradiction. 
    
    We now denote $D_k:= \{ y \in S_u \mid d_x \geq 1/k\}$. Fix $x_o \in D_k$. By Property $(P)$ we can cover $D_k \cap B_{r_0}(x_0)$
    where $r_0 = 1/k$ with a finite number of balls $B_{r_i}(x_i)$ with $x_i \in D_k$ and
     \[
      \sum_{i} r_i^\tau \leq \frac{r_o^\tau}{2}.
     \]
    Now we repeat the same argument for each $B_{r_i}(x_i)$ and cover it with balls $B_{r_{ij}}(x_{ij})$ to obtain 
     \[
      \sum_{j} r_{ij}^\tau \leq \frac{r_i^\tau}{2}.
     \]
    Repeating this argument $m$ times we obtain $\H^s (D_k \cap B_{r_0}(x_0))=0 $. We then let $k \to \infty$ to conclude the Lemma.
    \end{proof}
   
   \begin{lemma}  \label{l:fed2}
    Let $u_0$ be a blow-up of $u_k(x_0)$ where $u$ is a solution to \eqref{e:eigen} and $x_o \in S_{u}$. Then 
     \[
      \H^\tau (S_{u_0}) =0 \text{  for  } \tau>n-2.
     \]
   \end{lemma}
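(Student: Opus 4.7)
The plan is to exploit the blow-up classification from Corollary \ref{c:classify} to reduce the problem to two structured cases and handle each separately. Since $x_0 \in S_u$ forces $\nabla_x u(x_0) = 0$, the blow-up $u_0$ falls into exactly one of the cases (I) or (II) of that corollary (the linear case is excluded since $\nabla_x u(x_0) = 0$). I would then argue that Case (I) yields to linear algebra, while Case (II) requires Federer's dimension reduction.

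For Case (I), write $u_0(x,y) = p(x) - cy^2$ with $p(x) = \langle Ax, x\rangle$ and $c \geq 0$. Applying the $a$-harmonic equation $\operatorname{div}(|y|^a \nabla u_0) = 0$ yields $\operatorname{tr}(A) = c(1+a) \geq 0$. Since $\nabla_x p(x) = 2Ax$, the locus $\{\nabla_x p = 0\}$ equals $\ker A$, which contains $S_{u_0}$. If $p$ never takes negative values, then $\partial\{u_0(\cdot,0) < 0\}$ is empty and the conclusion is trivial. Otherwise $A$ has at least one negative eigenvalue, and combined with $\operatorname{tr}(A) \geq 0$ this forces at least one positive eigenvalue as well, so $\operatorname{rank} A \geq 2$ and $\dim \ker A \leq n-2$. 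This immediately gives $\H^\tau(S_{u_0}) = 0$ for $\tau > n-2$.

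For Case (II), $u_0 = q(x)$ is a non-trivial homogeneous harmonic polynomial in $x$ of degree $k \geq 2$, independent of $y$. Here I would induct on the dimension $n$, using Federer's dimension reduction. The base case $n = 1$ is vacuous since no harmonic polynomial in a single variable of degree $\geq 2$ exists. For the inductive step, suppose by contradiction that $\H^\tau(S_{u_0}) > 0$ for some $\tau > n-2$. A density point argument of the kind used in the proof of Lemma \ref{l:fed1} produces a point $x_1 \in S_{u_0} \setminus \{0\}$ with positive upper $\tau$-density. Blow up $u_0$ at $(x_1, 0)$ to obtain a function $w$; since $u_0$ is homogeneous of degree $k$ about the origin, the blow-up $w$ is translation-invariant along the ray through $x_1$ and therefore effectively depends on only $n-1$ transverse variables. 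Writing $w = w'(x')$ for $x' \in \R^{n-1}$, the translation invariance converts positive $\H^\tau$-density of $S_{u_0}$ at $x_1$ into $\H^{\tau-1}(S_{w'}) > 0$ in $\R^{n-1}$. Since $\tau - 1 > (n-1) - 2$ and $w'$ is still a non-trivial homogeneous harmonic polynomial (hence a blow-up type object in dimension $n-1$), this contradicts the inductive hypothesis.

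The main obstacle will be executing the Federer reduction in Case (II) rigorously: verifying that the blow-up $w$ at a non-origin singular point genuinely inherits homogeneity and $a$-harmonicity from $u_0$; checking that the density-point selection transfers positive $\H^\tau$-measure from $S_{u_0}$ to the singular set of the rescaled blow-up on $B_1$; and tracking the dimension drop by exactly $1$ under the radial translation invariance. Case (I), by comparison, reduces to a one-line linear algebra observation once the $a$-harmonicity constraint $\operatorname{tr}(A) = c(1+a)$ is in hand.
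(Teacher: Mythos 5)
Your proposal is correct and organizes the argument differently from the paper. The paper handles $n=1$ directly (using that in one dimension the only admissible blow-up has the form $ax^2-cy^2$ with $a\geq 0$ by the $a$-harmonicity constraint, which forces $S_{u_0}=\emptyset$), then reduces $n=2$ to $n=1$ by a single Federer blow-up, and invokes the Federer dimension-reduction machinery of \cite{ds15} for $n\geq 3$ — uniformly across the two shapes of blow-up. You instead split explicitly along the dichotomy in Corollary \ref{c:classify}: Case (I) you dispatch in every dimension with a single linear-algebra observation ($S_{u_0}\subset\ker A$, while $\operatorname{tr} A=c(1+a)\geq 0$ together with a sign change of $p$ forces $\operatorname{rank} A\geq 2$, so $\dim\ker A\leq n-2$), avoiding Federer altogether for degree-2 blow-ups; Case (II) you treat by Federer induction with the same $n=1$ vacuity as base case. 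Your Case (I) shortcut is a genuine gain in transparency for the degree-2 situation, since it exhibits the codimension-2 structure directly rather than through an abstract reduction. Both treatments lean on the same unstated machinery for the higher-degree case — the cone of admissible homogeneous profiles being closed under blow-ups at non-origin points, and the standard transfer of positive $\H^\tau$-density to $\H^{\tau-1}$ under the gained translation invariance — which the paper outsources to \cite{ds15} and you correctly flag as the remaining technical burden. One small caveat in phrasing your Case (II) induction: the inductive hypothesis should be stated for the class of even, homogeneous, $a$-harmonic profiles with $v_y\leq 0$ rather than literally "blow-ups of solutions to \eqref{e:eigen} in $\R^{n-1}$," since the double blow-up $w'$ is not a priori a blow-up of an actual solution in one lower dimension.
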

   
   \begin{proof}
    We begin by considering the problem in dimension $n=1$, and suppose there exists $x_0 \in S_u$. From Corollary \ref{c:classify} $u_0=ax^2 -cy^2$. 
    Since $x_0 \in \partial \{u(x,0)<0\}$, it follows that $0 \in \partial\{u_0(x,0)<0\}$. Since $c\leq 0$ we have $a \geq 0$ 
    and we immediately obtain a contradiction. The result now follows by using the standard argument of Federer for homogeneous solutions. 
    In dimension $n=2$ if $0 \neq x_0 \in S_{u_0}$ then performing a blow-up at $x_0$ and using the homogeneity of $u_0$ we obtain a blow-up limit $u_{00}$ in $n=1$
    with $0 \in S_{u_{00}}$ which is a contradiction. The argument for higher dimensions then follows using the ideas of Lemma \ref{l:fed1} as in \cite{ds15}. 
   \end{proof}

   We remark here that using the same arguments it is easy to show $\H^{\tau}(\Gamma)=0$ for $\tau>n-1$, so that the free boundary is of Hausdorff dimension $n-1$. 
   

\bibliographystyle{amsplain}
\bibliography{refplasma}

\end{document}